\date{}
\newtheorem{theorem}{Theorem}
\newtheorem{prp}{Proposition}
\begin{document}

\baselineskip 20pt
\title{On the boundary behavior of analytic functions in the unit disc}

\author{Spyros Pasias}

\maketitle
\begin{center}

\indent Orcid:0000-0001-6611-0155\\
\indent pasias.spyros@ucy.ac.cy\\
\indent University of Cyprus
\end{center}
\begin{abstract}
In this paper we will deal with problems in approximation theory of bounded analytic functions on the unit disc and their boundary behavior on the unit circle. We will attempt to unify two known such theorems one of which is a classical theorem of S.V Kolesnikov \cite{KO}, to create a stronger theorem. We will solve various special cases of the unification problem, and in particular one case extends the main result found in \cite{PE} regarding the boundary behavior of Blaschke products, meanwhile also providing a more compact and elegant approach for the proof of its necessity. The necessity part of the proof uses a classical theorem of Baire. Lastly, we will prove an analogue of the classical theorem by Kolesnikov for Blaschke products provided an extra requirement is satisfied and use the main result in \cite{PE} to simplify the proof of an important result found in \cite{PI}.
\end{abstract}

\section{Introduction}

Approximation theory plays a crucial role in the study of bounded analytic functions, particularly in understanding their behavior within the unit disc $D$ and on its boundary, the unit circle $T$. Over the years, various theorems have addressed the boundary behavior of such functions, highlighting both their theoretical significance and practical applications in complex analysis. 

In this work inspired by my doctorate thesis\cite{Spy}, we aim to build upon this established framework by seeking a unification of two significant theorems in the field---one of which is Kolesnikov’s classical result. Our objective is not only to develop a stronger unified theorem but also to address particular cases that extend recent results concerning the boundary behavior of \emph{Blaschke products}. Recall that a Blaschke product is an infinite product of the form
\[
B(z,A) = \prod_{n=1}^{\infty} \frac{|a_n|}{a_n} \cdot \frac{a_n - z}{1 - \overline{a_n}\, z},
\]
where \(A=\lbrace{ a_n\rbrace}_{n=1}^\infty\) is a sequence in the open unit disk $D$, that approaches the boundary unit circle $T$ fast enough. Precisely, satisfying
\[
\sum_{n=1}^{\infty} \bigl(1 - |a_n|\bigr) < \infty.
\]
These functions are bounded analytic functions on the unit disk whose zero sets coincide exactly with the sequence \((a_n)\). Blaschke products play a fundamental role in the Factorization Theorem for bounded analytic functions and in the construction of explicit examples with prescribed zeros and boundary behavior.\\
Our investigation also simplifies key proofs, offering more elegant approaches to known problems, while leveraging tools such as Baire’s classical theorem. In addition, we explore an analogue of Kolesnikov’s theorem under specific conditions, demonstrating its relevance to recent findings and applications.

By approaching these problems with both classical and modern techniques, this paper contributes to a deeper understanding of approximation theory and its implications for bounded analytic functions, opening pathways for further exploration in complex analysis.

\section{Important theorems and the unification problem}
\noindent We start by stating some important theorems about the boundary behavior of bounded analytic functions. Throughout the paper we let $D$ represent the unit disc and $T$ the unit circle. The following important theorem is due to S.V Kolesnikov \cite{KO}.

\begin{theorem}[Kolesnikov's Theorem] Let $E\subset T$ be of type $G_{\delta\sigma}$ and of measure $0$. Then there exists a bounded analytic function in the unit disc $D$ which fails to have radial limits exactly at the points of $E$.
\end{theorem}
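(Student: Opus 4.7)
The plan is to bootstrap Kolesnikov's theorem from the easier case where the exceptional set is a single $G_{\delta}$ set of measure zero, then glue the pieces by a rapidly-decaying weighted series. Since $E$ is of type $G_{\delta\sigma}$, I would write $E=\bigcup_{n=1}^{\infty}E_{n}$ with each $E_{n}$ a $G_{\delta}$ set of measure zero.

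The main technical ingredient is the following $G_{\delta}$ lemma: for every $G_{\delta}$ set $F\subset T$ of measure zero there is a bounded analytic function $\varphi_{F}\colon D\to\overline{D}$ such that (i) $\varphi_{F}$ has a radial limit at every $\zeta\in T\setminus F$, (ii) $\varphi_{F}$ fails to have a radial limit at every $\zeta\in F$, and, crucially, (iii) the radial cluster set of $\varphi_{F}$ at every $\zeta\in F$ has diameter bounded below by an absolute constant $c>0$. I would prove this by a Lohwater--Piranian-style construction: write $F=\bigcap_{k}U_{k}$ with open $U_{k+1}\subset U_{k}$ and $m(U_{k})\to 0$, and form a Blaschke product whose zeros lie on circles $|z|=1-\rho_{k}$ and project densely onto $U_{k}\setminus U_{k+1}$. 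Along a radius to a point of $F$ one passes infinitely often arbitrarily close to zeros (forcing $|\varphi_{F}|$ near $0$) while also crossing radii on which $|\varphi_{F}|$ is close to $1$, producing the required oscillation; away from $F$ the zeros are eventually bounded away from the radius, and the product converges there by an elementary tail estimate.

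With the lemma in hand, set
\[
f(z)=\sum_{n=1}^{\infty}\frac{\varphi_{E_{n}}(z)}{M^{n}}
\]
for a large constant $M>1$. The series converges uniformly on $D$, so $f$ is bounded analytic. If $\zeta\notin E$, every $\varphi_{E_{n}}$ has a radial limit at $\zeta$, and uniform convergence in $r$ transfers this to $f$. If $\zeta\in E$, let $n_{0}$ be the smallest index with $\zeta\in E_{n_{0}}$. The partial sum $\sum_{n<n_{0}}M^{-n}\varphi_{E_{n}}(r\zeta)$ has a limit as $r\to 1^{-}$, the single term $M^{-n_{0}}\varphi_{E_{n_{0}}}(r\zeta)$ has oscillation at least $c\cdot M^{-n_{0}}$ at every scale, and the tail $\sum_{n>n_{0}}M^{-n}\varphi_{E_{n}}(r\zeta)$ is bounded in modulus by $M^{-n_{0}}/(M-1)$. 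A triangle inequality then shows the oscillation of $r\mapsto f(r\zeta)$ near $r=1$ is at least $\bigl(c-\tfrac{2}{M-1}\bigr)M^{-n_{0}}>0$ once $M$ is taken large enough, so $f$ admits no radial limit at $\zeta$.

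The main obstacle is the $G_{\delta}$ lemma, and specifically the quantitative oscillation clause (iii): mere failure of a radial limit is fragile and can be destroyed by the bounded perturbations contributed by the other terms of the series, so one really needs an honest scale-free lower bound. Engineering a single Blaschke product that converges radially on all of $T\setminus F$ while maintaining such a lower bound at every point of $F$ is the heart of the matter; I would attack it by carefully balancing, in each annulus $1-\rho_{k}\le|z|\le 1-\rho_{k+1}$, the density of zeros projecting into $U_{k}\setminus U_{k+1}$ against the gaps required for radial convergence off $F$.
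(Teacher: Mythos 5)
First, note that the paper does not prove this statement at all: Kolesnikov's Theorem is quoted as a known deep result with a citation to Kolesnikov's 1994 paper, so there is no in-paper proof to compare against. Judged on its own terms, your proposal has a correct outer layer and a hollow core. The gluing step is fine: writing $E=\bigcup_n E_n$ with each $E_n$ a $G_\delta$ null set, and summing $\sum_n M^{-n}\varphi_{E_n}$ with $M>1+2/c$ so that the oscillation of the first "bad" term dominates the tail, is exactly the kind of weighted-series argument the paper itself uses (for Theorems 5 and 14), and your bookkeeping with the minimal index $n_0$ is sound. But this reduction transfers essentially the entire content of Kolesnikov's theorem into your unproven $G_\delta$ lemma: producing a bounded analytic function whose radial limits fail at \emph{every} point of a prescribed $G_\delta$ null set $F$ while existing at \emph{every} point of the full-measure $F_\sigma$ complement (Fatou only gives almost-everywhere existence), with a uniform lower bound $c$ on the radial oscillation over all of $F$. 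That lemma is not a routine Lohwater--Piranian exercise; it is the heart of the theorem, and the theorem resisted proof until 1994 precisely because of it.

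Your sketch of the lemma also fails as described. If the level-$k$ zeros sit at height $1-\rho_k$ over $U_k\setminus U_{k+1}$, then a point $\zeta\in F=\bigcap_k U_k$ lies in $U_{k+1}$, so the nearest level-$k$ zero is at angular distance at least $\operatorname{dist}(\zeta,T\setminus U_{k+1})$, a quantity that varies with $\zeta$ and is in no way comparable to $\rho_k$; the radius at $\zeta$ need never come hyperbolically close to any zero, and the claimed dip of $|\varphi_F|$ toward $0$ does not occur. If instead you spread the zeros over all of $U_k$ (a $\rho_k$-net, say), every $\zeta\in F$ does get crowded, but so does every point of $\partial U_k\setminus F$ that is approached by the sets $U_j$ for all $j$ (e.g.\ an endpoint of a component arc of every $U_j$): such a point then has a zero within $O(\rho_j)$ of $(1-\rho_j)\zeta$ at every level, and the radial limit there is destroyed even though $\zeta\notin F$. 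This tension --- crowd every point of $F$ without crowding any boundary point of the $U_k$ outside $F$ --- is the actual problem to be solved, and your closing sentence about "carefully balancing densities against gaps" restates it rather than resolving it. Separately, the uniform oscillation constant $c$ is an extra strong requirement you would also need to establish; without it the tail of your series can swallow the oscillation of $\varphi_{E_{n_0}}$ at points where that oscillation happens to be small. As it stands, the proposal is a correct reduction of the theorem to a harder-than-acknowledged lemma, not a proof.
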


\noindent It should be noted that the converse statement of {Kolesnikov's Theorem} which states:\\
\noindent The set in which the radial limits of a bounded analytic function is necessarily $G_{\delta\sigma}$, is elementary and has been known long before {Kolesnikov's Theorem} was introduced. A proof can be found in (\cite{AL}, p. 23).

\begin{theorem}[Fatou's interpolation theorem, 1906] Let $K$ be a closed subset of $T$ such that $m(K)=0$. Then there exists a function $f$ in the disc algebra which vanishes precisely on $K$.
\end{theorem}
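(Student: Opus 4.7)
The plan is to produce $f$ as an outer function whose boundary modulus vanishes precisely on $K$, and then verify that $f$ admits a continuous extension to $\overline{D}$.

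First I construct a continuous nonnegative function $\phi$ on $T$ with $\phi^{-1}(0)=K$ and $\log\phi\in L^{1}(T)$. Since $K$ is closed, $T\setminus K$ is an at most countable disjoint union of open arcs $\{I_{n}\}$ of lengths $\ell_{n}$, with $\sum_{n}\ell_{n}=2\pi$. On each $I_{n}$ I define $\phi$ to be the tent function that vanishes at the two endpoints of $I_{n}$ and attains a maximum of $\min(\ell_{n},1)$ at the midpoint, and I extend $\phi$ by $0$ on $K$. Because the peak heights vanish along arcs accumulating to $K$, $\phi$ is continuous---indeed globally Lipschitz---on all of $T$. A direct integration yields $\int_{I_{n}}\log\phi\,d\theta=\ell_{n}(\log\ell_{n}-1)$ when $\ell_{n}\leq 1$, and the elementary bound $x\log x\geq -1/e$ for $x\in(0,1]$ together with $\sum\ell_{n}=2\pi$ shows that $\log\phi\in L^{1}(T)$.

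Next I take $f$ to be the outer function of modulus $\phi$,
$$
f(z)\;=\;\exp\!\left(\int_{0}^{2\pi}\frac{e^{it}+z}{e^{it}-z}\,\log\phi(e^{it})\,\frac{dt}{2\pi}\right),
$$
which is analytic and nonvanishing on $D$, bounded by $1$ there, with $|f(e^{i\theta})|=\phi(e^{i\theta})$ almost everywhere on $T$. It remains to show that $f$ extends continuously to $\overline{D}$. At a point $e^{i\theta_{0}}\in K$, the continuity of $\phi$ at $e^{i\theta_{0}}$ makes $\log\phi$ upper semicontinuous with value $-\infty$ there, so the standard upper-semicontinuity estimate for Poisson integrals gives $\limsup_{z\to e^{i\theta_{0}}}|f(z)|\leq \phi(e^{i\theta_{0}})=0$, whence $f(z)\to 0$. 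At a point $e^{i\theta_{0}}\in T\setminus K$, the function $\log\phi$ is locally Lipschitz near $e^{i\theta_{0}}$, which suffices for both the real and imaginary parts of the Herglotz integral to extend continuously to $e^{i\theta_{0}}$, so that $f(e^{i\theta_{0}})$ is defined and $|f(e^{i\theta_{0}})|=\phi(e^{i\theta_{0}})>0$. Combined with non-vanishing on $D$, this shows $f$ is in the disc algebra with zero set exactly $K$.

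The main obstacle I foresee is the continuous extension at points of $T\setminus K$: mere continuity of $\log\phi$ does not guarantee continuous boundary extension of its harmonic conjugate, so one must invoke some regularity beyond continuity. The Lipschitz behavior of $\phi$ provided by the tent construction, together with the fact that the singular set of $\log\phi$ is confined to $K$ and therefore remains separated from any fixed point of $T\setminus K$, supplies the extra smoothness needed to push the routine Herglotz-integral estimates through.
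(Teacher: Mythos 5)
The paper offers no proof of this theorem itself --- it defers to \cite{KO} and \cite{ART}, where the classical construction produces $f$ with positive real part, essentially as $1/(1+U+i\widetilde{U})$ for a positive harmonic function $U$ blowing up on $K$ --- so your outer-function route must be judged on its own. The architecture is viable, but there is a genuine gap at the integrability step. With peak heights $\min(\ell_n,1)$ you need $\sum_n \ell_n\log(1/\ell_n)<\infty$, and this does \emph{not} follow from $\sum_n\ell_n=2\pi$: the bound $x\log x\ge -1/e$ only gives $\int_{I_n}\log\phi\,d\theta\ge -1/e-\ell_n$, and summing $-1/e$ over infinitely many arcs gives $-\infty$, not a finite lower bound. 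Concretely, take $K$ a generalized Cantor set of measure zero in which the $2^{k}$ arcs removed at stage $k$ have total length $c/k^{2}$; each such arc has length roughly $c\,2^{-k}/k^{2}$, so its contribution to $\sum\ell\log(1/\ell)$ is about $c(\log 2)/k$ per stage and the full sum diverges. For such $K$ your $\log\phi\notin L^{1}(T)$ and the Herglotz integral is identically $-\infty$, i.e.\ the ``outer function'' degenerates to $f\equiv 0$. The repair is standard but must be stated explicitly: this is exactly where the hypothesis $m(K)=0$ (equivalently $\sum_n\ell_n<\infty$) earns its keep. Choose peak heights $h_n\to 0$ decaying slowly enough that $\sum_n\ell_n\log(1/h_n)<\infty$ --- for instance $h_n=e^{-k}$ for all arcs in the $k$-th block of a partition chosen so that $\sum_{n>N_k}\ell_n<4^{-k}$. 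Continuity of $\phi$ at points of $K$ needs only $h_n\to 0$, not $h_n\le\ell_n$, so nothing else breaks.

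The remaining steps are sound. The upper-semicontinuity estimate for Poisson integrals correctly forces $f(z)\to 0$ unrestrictedly at points of $K$, and you are right to flag that mere continuity of $\log\phi$ would not control the harmonic conjugate at points of $T\setminus K$; local Lipschitz (indeed Dini) regularity of the data on an open arc does yield continuous extension of the Herglotz integral to that arc, and this local regularity survives the modified choice of heights, since near any fixed point of $T\setminus K$ the function $\phi$ is a single tent bounded away from zero. Once the integrability of $\log\phi$ is secured as above, your proof closes and is a legitimate alternative to the positive-real-part construction the paper cites.
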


\noindent A proof of {Fatou's interpolation theorem}, (that shows the real part of $f$ is positive on $D$) can be found in (\cite{KO}, p. $29-30$). A new proof may also be found in \cite{ART}. The construction of Fatou's function is widely used and has many applications. In particular it is used to prove an important result by \emph{F. and M. Riesz} which states that any analytic measure is absolutely continuous with respect to the Lebesgue measure.\\
\noindent The necessity part of the following theorem is well known and its proof is elementary. The sufficiency part is proven in \cite{Da} and its proof is using the method of the paper \cite{AD} and is based on {Fatou's interpolation theorem}.

\begin{theorem}
Let $E$ be a set on $T$. Then there exists a function $f\in H^\infty$ which has no radial limits on $E$ but has unrestricted limits at each point of $T\setminus E$ if and only if $E$ is an $F_\sigma$ set of measure zero.
\end{theorem}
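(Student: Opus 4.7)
The plan is to handle the two implications separately. Necessity is short and rests on Fatou's $H^\infty$ radial-limit theorem plus the observation that the set of points of unrestricted limit of an analytic function is always $G_\delta$. Sufficiency is the substantive half: I will use Fatou's interpolation theorem to produce, for each compact piece of an $F_\sigma$ decomposition of $E$, an $H^\infty$ function with prescribed boundary behaviour, then assemble them into $f$ as a rapidly convergent series.

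For necessity, let $f\in H^\infty$ have no radial limits on $E$ and unrestricted limits on $T\setminus E$. Since an unrestricted limit implies a radial limit, $T\setminus E$ is exactly the set of unrestricted-limit points of $f$; and Fatou's radial-limit theorem gives $m(E)=0$. To see that $E$ is $F_\sigma$, for each $n\ge 1$ set
$$
U_n=\bigl\{\zeta\in T : \exists\,\delta>0 \text{ with } \operatorname{diam} f(D\cap B(\zeta,\delta))<1/n\bigr\},
$$
which is open in $T$ by continuity of $f$ on $D$. Then $\bigcap_n U_n$ is precisely the set of points of zero oscillation, equivalently the set of unrestricted-limit points. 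Hence $T\setminus E=\bigcap_n U_n$ is $G_\delta$ and $E$ is $F_\sigma$.

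For sufficiency, write $E=\bigcup_{n=1}^\infty K_n$ with $K_n$ closed of measure zero. The strategy is to construct $f_n\in H^\infty$ with $\|f_n\|_\infty\le 1$ satisfying (a) $f_n$ extends continuously to $\bar D\setminus K_n$, so $f_n$ has unrestricted limits on $T\setminus K_n$, and (b) the radial cluster set of $f_n$ at every $\zeta\in K_n$ has diameter bounded below by a fixed $\delta>0$, so in particular $f_n$ has no radial limit on $K_n$. Then $f=\sum_n c_n f_n$ is formed with $c_n>0$ decaying so fast (e.g.\ $c_n=2^{-n^2}$) that $\sum_n c_n<\infty$ (giving $f\in H^\infty$ by uniform convergence) and $2\sum_{n>n_0}c_n<c_{n_0}\delta$ for every $n_0$. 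At $\zeta\in T\setminus E$ every summand has an unrestricted limit at $\zeta$ and so does $f$ by uniform convergence; at $\zeta\in E$, if $n_0$ is the least index with $\zeta\in K_{n_0}$, the summands with $n<n_0$ contribute an unrestricted limit, the $n_0$-th summand contributes cluster-set diameter $\ge c_{n_0}\delta$, and the remaining summands contribute total oscillation $\le 2\sum_{n>n_0}c_n<c_{n_0}\delta$, so the radial cluster set of $f$ at $\zeta$ is not a point.

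The construction of the $f_n$ is where Fatou's interpolation theorem enters and is the main obstacle. Fatou supplies $\phi_n$ in the disc algebra with $\phi_n\equiv 0$ on $K_n$, nonvanishing elsewhere on $\bar D$, and $\operatorname{Re}\phi_n>0$ on $D$. The naive choice $\exp(-1/\phi_n)$ is bounded by $1$ and continuous on $\bar D\setminus K_n$, so (a) is immediate, but its radial limit at $\zeta\in K_n$ typically exists and equals $0$ whenever $\arg\phi_n(r\zeta)$ stays in a compact subinterval of $(-\pi/2,\pi/2)$. Enforcing (b) at \emph{every} point of $K_n$ with a uniform lower bound on cluster-set diameter therefore requires refining this construction --- for instance by tailoring the singular measure in the Herglotz representation of $\phi_n$ so that $\operatorname{Im}(1/\phi_n(r\zeta))$ oscillates unboundedly at every $\zeta\in K_n$, or by multiplying $\exp(-1/\phi_n)$ by an auxiliary inner factor (say a Blaschke product whose zeros cluster on $K_n$ in a carefully prescribed way) designed to destroy all residual radial limits while preserving continuity off $K_n$. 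Reconciling (a) and (b) with a uniform oscillation bound is the delicate step that the method of \cite{AD} carries out.
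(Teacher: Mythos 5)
Your necessity argument is correct and is essentially the standard one (it is the same oscillation-set argument the paper itself uses for the necessity part of Theorem 6): measure zero comes from Fatou's radial-limit theorem, and the set of points of unrestricted limit is exhibited as a $G_\delta$ via the open sets $U_n$. Your reduction of sufficiency to the construction of the building blocks $f_n$ --- decomposing $E$ into closed measure-zero pieces $K_n$, demanding a \emph{uniform} lower bound $\delta$ on the radial cluster-set diameter over $K_n$ together with continuity off $K_n$, and assembling a series with $2\sum_{n>n_0}c_n<c_{n_0}\delta$ --- is also sound. For calibration: the paper does not prove this theorem at all; it only records that necessity is elementary and that sufficiency is established in \cite{Da} by the method of \cite{AD}, so your proposal is being measured against that cited method rather than against an in-paper argument.

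The genuine gap is exactly where you flag it: the building block $f_n$ is never constructed, and without it the sufficiency half is incomplete. You correctly note that $\exp(-1/\phi_n)$ fails (it tends radially to $0$ on $K_n$), but neither of your proposed repairs is carried out, and each is problematic as stated: modifying the Herglotz measure of $\phi_n$ threatens the continuity of $\phi_n$ on $\overline{D}\setminus K_n$ on which property (a) rests, while an auxiliary Blaschke factor clustering on $K_n$ gives no uniform lower bound on cluster-set diameter without substantial further work. The construction that closes the gap (and is the content of the method of \cite{AD} you defer to) is short: with $\phi$ Fatou's function for $K$ (so $\phi$ is in the disc algebra, $\phi=0$ exactly on $K$, and $\operatorname{Re}\phi>0$ on $D$), set $f_K=\phi^{-i}=e^{-i\log\phi}$, using the branch of $\log$ on the right half-plane. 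Writing $\log\phi=\log|\phi|+i\arg\phi$ with $|\arg\phi|<\pi/2$, one gets $|f_K|=e^{\arg\phi}\in[e^{-\pi/2},e^{\pi/2}]$, so $f_K\in H^\infty$ and $f_K$ is continuous on $\overline{D}\setminus K$, giving (a); while along any radius ending at a point of $K$ the argument of $f_K$ equals $-\log|\phi|$, which increases continuously to $+\infty$, so $f_K$ winds repeatedly around the annulus $e^{-\pi/2}\le|w|\le e^{\pi/2}$ and its radial cluster set has diameter at least $2e^{-\pi/2}$ at \emph{every} point of $K$ --- precisely the uniform bound (b) your series argument requires. Supplying this (or an equivalent) construction is necessary before your proof can be considered complete.
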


\noindent The following theorem is a solution to a problem proposed by Rubel in 1973 and was solved in \cite{DA} with an affirmative answer to the problem. The problem asks whether for any $G_\delta$ set $F\subset T$ of measure zero, there exists a non vanishing bounded analytic function $f$ on $D$ such that $f=0$ precisely on $F$ and such that the radial limits of $f$ exist everywhere on $T$. The following stronger theorem found in \cite{art} provides an affirmative answer to Rubel's problem as well.

\begin{theorem} Let $F$ be a $G_\delta$ of measure zero on $T$. Then there exists a nonvanishing bounded analytic function $g$ such that:\begin{enumerate}
 \item $g$ has non zero radial limits everywhere on $T\setminus F$.
 \item $g$ has vanishing unrestricted limits at each point of $F$.
 \end{enumerate}
\end{theorem}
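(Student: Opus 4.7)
The plan is to realize $g$ as $g(z) = e^{-\Phi(z)}$ for a holomorphic $\Phi : D \to \{w : \mathrm{Re}\,w \ge 0\}$; this automatically makes $g$ nonvanishing on $D$ and bounded by $1$. The two boundary conditions translate into (a) $\mathrm{Re}\,\Phi(z) \to +\infty$ unrestrictedly as $z \to \zeta$ for each $\zeta \in F$, and (b) $\Phi$ has a finite radial limit at every $\zeta \in T\setminus F$. Writing $F = \bigcap_{n=1}^\infty U_n$ with $U_{n+1} \subset U_n$ open and $m(U_n) \to 0$ rapidly, I set $\phi = \sum_{n} \chi_{U_n} \in L^1(T)$; then $\phi(\zeta) = +\infty$ exactly on $F$, and $\phi(\zeta) < \infty$ for $\zeta \in T\setminus F$.

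Let
$$\Phi(z) = \frac{1}{2\pi}\int_T \phi(e^{i\theta})\,\frac{e^{i\theta}+z}{e^{i\theta}-z}\,d\theta,$$
so that $\mathrm{Re}\,\Phi = P\phi \ge 0$. For (a), fix $\zeta \in F$. For each $n$ there is an open subarc $J_n \subset U_n$ with $\zeta$ in its interior, since $U_n$ is open and contains $\zeta$. The harmonic function $P\chi_{J_n}$ extends continuously to $\bar D$ away from the two endpoints of $J_n$, with boundary value $1$ on $J_n$; hence $P\chi_{J_n}(z) \to 1$ as $z \to \zeta$ unrestrictedly in $\bar D$. Since $\phi \ge n\chi_{J_n}$ on $T$, we get $\liminf_{z \to \zeta} P\phi(z) \ge n$ for every $n$, so $P\phi(z) \to +\infty$ unrestrictedly and $g(z) \to 0$ unrestrictedly at $\zeta$. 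This secures condition (2).

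The substantive obstacle is (b): ensuring that $\Phi$ possesses finite radial limits at \emph{every} point of $T\setminus F$, not merely almost everywhere. Points $\zeta \in T\setminus F$ that are accumulation points of $F$ are delicate because $\phi$ is unbounded in every neighborhood of such $\zeta$ (for instance when $F$ is a dense $G_\delta$), so elementary $L^1$ Poisson theory does not apply. I would resolve this by a careful refinement of the sequence $\{U_n\}$ and, if necessary, by a reweighting $\phi = \sum_n c_n \chi_{U_n}$ with $c_n$ decreasing and $m(U_n)$ shrinking fast enough that, at each $\zeta \notin F$, the tail $\sum_{n \ge N} c_n P\chi_{U_n}(r\zeta)$ tends to $0$ as $r \to 1^{-}$ (where $N$ is the first index with $\zeta \notin U_N$), while the blow-up argument in (a) is preserved by demanding $\sum c_n = +\infty$. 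A similar control must be arranged for $\mathrm{Im}\,\Phi$ (the conjugate Poisson integral of $\phi$) to ensure existence of the full radial limit of $\Phi$, which is the most technical point. Once this is in place, $g = e^{-\Phi}$ is holomorphic, bounded by $1$, nonvanishing in $D$, has vanishing unrestricted limits at every $\zeta \in F$, and has finite nonzero radial limits $e^{-\Phi^{\ast}(\zeta)}$ at every $\zeta \in T \setminus F$, establishing both (1) and (2).
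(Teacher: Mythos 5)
The paper does not actually prove this statement; it quotes it from the reference \cite{art} (Danielyan), so the only thing to assess is whether your construction stands on its own. Your overall template --- take $g=e^{-\Phi}$ with $\Phi$ the Herglotz integral of a nonnegative $\phi$ that is $+\infty$ exactly on $F$ --- is the right one and is in the spirit of the Fatou-type constructions the cited papers rely on, and your argument for condition (2) (unrestricted vanishing on $F$ via $P\chi_{J_n}\to 1$ at interior points of $J_n$) is correct. But the proof has a genuine gap exactly where the theorem is hard, and you have only gestured at it: you must produce finite radial limits of \emph{both} $\mathrm{Re}\,\Phi$ and $\mathrm{Im}\,\Phi$ at \emph{every} point of $T\setminus F$, not almost everywhere, and the paragraph beginning ``I would resolve this by a careful refinement'' is a statement of intent, not an argument.

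Moreover, the proposed fix does not obviously work. If $\zeta\in T\setminus F$ is an accumulation point of $F$ (unavoidable when $F$ is dense, e.g.\ a dense $G_\delta$ of measure zero), then $\zeta\in\overline{U_n}$ for every $n$, and $P\chi_{U_n}(r\zeta)$ need not tend to $0$ as $r\to 1^-$: if $\zeta$ is an endpoint of a component arc of $U_n$, this Poisson integral tends to $\tfrac12$, and in general it is controlled only by the relative density of $U_n$ near $\zeta$, not by $m(U_n)$. Since your blow-up argument on $F$ forces $\sum_n c_n=\infty$, the tail $\sum_{n\ge N}c_nP\chi_{U_n}(r\zeta)$ can then diverge at points $\zeta\notin F$, destroying condition (1). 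Repairing this requires choosing the $U_n$ with quantitative smallness near each point of the complement (essentially re-deriving Fatou's interpolation construction for a shrinking sequence of neighborhoods), and the existence of the radial limit of the conjugate function $\mathrm{Im}\,\Phi$ at every point of $T\setminus F$ is a further, separate difficulty that you leave entirely untouched. As written, the proposal establishes (2) but not (1).
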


\noindent Now we use the three theorems above to solve a particular case of Problem $1$ stated in \cite{Da}. Although the problem stated in \cite{Da} is still an open problem and a good research opportunity, we have solved several particular cases of it in this paper. The problem is an attempt to unify both {Kolesnikov's Theorem} and {Theorem 3} to a single stronger theorem. The problem we are interested in (See [11], p. 4) is the following:\vspace{0.12in}

\noindent\textbf{Unification Problem.}
\emph{Let $E_1\subset E_2$ be subsets of the unit circle $T\equiv\lbrace z\mid |z|=1 \rbrace$. Find necessary and sufficient conditions for the existence of an $f\in H^\infty$ such that the radial limits of $f$ fail to exist precisely on $E_1$ and it has unrestricted limits precisely at $T\setminus E_2$.}\\

\noindent Obvious necessary conditions are that $E_1$ is of type $G_{\delta\sigma}$ and $E_2$ is of type $F_\sigma$. The case $E_1=\emptyset$ is also dealt with by the following theorem.

\begin{theorem}[Brown, Gauthier and Hengartner]
Let $E$ be a set on $T$. Then there exists a function $f\in H^\infty$ which has no unrestricted limits on $E$ but has unrestricted limits at every point of $T\setminus E$ if and only if $E$ is of type $F_\sigma$.
\end{theorem}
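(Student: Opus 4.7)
\noindent The plan is to prove the two directions separately. Necessity reduces to a routine $G_\delta$-characterization of the set of unrestricted-limit points, while sufficiency requires constructing $f$ as a rapidly convergent series of building blocks, each attached to one compact piece in an $F_\sigma$ decomposition of $E$.

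For necessity, suppose $f\in H^\infty$ has unrestricted limits at every point of $T\setminus E$. For each integer $n\geq 1$ set
\[
V_n \;=\; \Bigl\{\zeta\in T : \exists\, r>0 \text{ with } \sup_{z,w\in D\cap B(\zeta,r)} |f(z)-f(w)| < \tfrac{1}{n}\Bigr\}.
\]
Each $V_n$ is open in $T$, and a Cauchy-type characterization of the unrestricted limit yields $\bigcap_n V_n = T\setminus E$. Hence $E = \bigcup_n(T\setminus V_n)$ is an $F_\sigma$ set.

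For sufficiency, write $E=\bigcup_{n\geq 1}K_n$ with each $K_n\subset T$ compact and $K_n\subset K_{n+1}$. For each compact $K\subset T$ I would construct a building block $g_K\in H^\infty$ with $\|g_K\|_\infty\leq 1$ satisfying (i) $g_K$ extends analytically across every arc of $T\setminus K$, and (ii) the cluster set $C(g_K,\zeta)$ equals $\overline{D}$ at every $\zeta\in K$. A convenient candidate is the singular inner function
\[
g_K(z) \;=\; \exp\!\Bigl(-\!\int_T \tfrac{e^{i\theta}+z}{e^{i\theta}-z}\,d\mu_K(\theta)\Bigr),
\qquad \mu_K = \sum_{k\geq 1} 2^{-k}\delta_{\zeta_k},
\]
where $\{\zeta_k\}$ is any countable dense subset of $K$. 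Property (i) follows because $g_K$ extends analytically to $\C\setminus K$ (the integrand is analytic off the atoms of $\mu_K$, whose topological closure is $K$). For property (ii), the classical computation at a point-mass gives $C(g_K,\zeta_k)=\overline{D}$ at each atom; the upper semi-continuity of cluster sets (if $\zeta_j\to\zeta$ and $p_j\in C(g_K,\zeta_j)$ with $p_j\to p$, then $p\in C(g_K,\zeta)$, by the diagonal choice $z_j\in D\cap B(\zeta_j,1/j)$ with $|g_K(z_j)-p_j|<1/j$) then forces $C(g_K,\zeta)=\overline{D}$ for every $\zeta\in K=\overline{\{\zeta_k\}}$.

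With these building blocks $g_n:=g_{K_n}$ in hand, set $f(z)=\sum_{n=1}^\infty c_n g_n(z)$ with $c_n=4^{-n}$; clearly $f\in H^\infty$. At $\zeta\in T\setminus E$, every $g_n$ is analytic at $\zeta$ and a uniform-tail estimate shows $f$ has an unrestricted limit at $\zeta$. At $\zeta\in E$, let $N=\min\{n:\zeta\in K_n\}$; the head $\sum_{n<N}c_n g_n$ is continuous at $\zeta$, and by picking $\alpha_1,\alpha_2\in\overline{D}=C(g_N,\zeta)$ with $|\alpha_1-\alpha_2|=2$ and sequences $z_k,w_k\to\zeta$ with $g_N(z_k)\to\alpha_1$, $g_N(w_k)\to\alpha_2$, then diagonalizing so that each $g_n$ with $n>N$ converges along both sequences, the triangle inequality yields
\[
\bigl|\lim_k f(z_k)-\lim_k f(w_k)\bigr| \;\geq\; 2c_N - 2\sum_{n>N}c_n \;=\; \tfrac{4}{3}\cdot 4^{-N} \;>\; 0,
\]
so $f$ has no unrestricted limit at $\zeta$. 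The expected main obstacle is property (ii) of the building block---pinning down that upper semi-continuity of cluster sets propagates the full-disc cluster value from the atoms of $\mu_K$ to \emph{every} point of $K$, not just a dense subset; once that is secured, the rest is a routine convergence argument.
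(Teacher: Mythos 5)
Your overall architecture is the same as the paper's (the paper only sketches this proof): decompose $E$ into a countable union of closed sets, attach to each one an inner function whose singularities sit exactly on that set, and sum a rapidly convergent series; your necessity argument via the open sets $V_n$ is also the standard one. The differences are in the building blocks: the paper takes Blaschke products $B_n$ with zeros accumulating precisely on $C_n$ and weights $2^{-n}$, while you take singular inner functions with purely atomic measure supported on a dense subset of $K_n$ and weights $4^{-n}$. Your choice of $4^{-n}$ is actually the more careful one --- with weights $2^{-n}$ and cluster-set diameter at most $2$, the crude estimate $(\mathrm{diam})\cdot c_N-2\sum_{n>N}c_n$ gives only $0$, so the paper's sketch needs either faster decay or a finer argument at exactly the point you handle explicitly.

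There is, however, one step whose justification does not go through as written: the claim that $C(g_K,\zeta_k)=\overline{D}$ at each atom ``by the classical computation at a point-mass.'' That computation applies to the single factor $\exp\bigl(-2^{-k}\tfrac{\zeta_k+z}{\zeta_k-z}\bigr)$; your $g_K$ is that factor multiplied by another inner function $h$, and solving $S_k(z_j)=v$ only puts $v\cdot(\text{some limit of }h(z_j))$ into the cluster set, not $v$ itself. The conclusion is nevertheless true --- it is the classical theorem that the cluster set of an inner function at any point of its spectrum is all of $\overline{D}$ --- so you should either cite that theorem or use the following self-contained patch, which suffices for your series estimate: at every $\zeta\in K$ the cluster set of $g_K$ contains $0$ (the radial limit of $g_K$ at each atom is $0$, the atoms are dense in $K$, and cluster sets are upper semi-continuous as you note) and also contains a point of modulus $1$ (since $|g_K|=1$ a.e.\ on $T$, every arc around $\zeta$ contains radii along which $|g_K|$ gets arbitrarily close to $1$). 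Hence $\mathrm{diam}\,C(g_K,\zeta)\geq 1$, and replacing the factor $2c_N$ in your final display by $c_N$ still yields the positive lower bound $\tfrac{1}{3}\cdot 4^{-N}$. With that repair the proof is complete.
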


\noindent The proof of the theorem above is not difficult. The idea is to express $E$ as a countable union of closed sets $E=\bigcup_n C_n$, and then for each of the sets $C_n$ pick a respective Blaschke product $B_n$ with zeros accumulating precisely on $C_n$. Then, since Blaschke products are bounded by modulus by $1$, the infinite sum $\sum_{n=1}^\infty \frac{B_n}{2^n}$ converges to the function with the desired properties.\vspace{0.12in}

\noindent Now we will proceed and solve some particular cases of the Unification Problem. In the proofs that follow, we note that each function of the unit disc is extended to take the value of its radial limits on the unit circle $T$ when they exist.
\section{A case of the unification Theorem that extends Collewl's Theorem}
The following theorem is an elementary case of the unification problem.\\

\begin{theorem} Let $E_1\subset E_2$ be subsets of the unit circle $T$ and suppose that the measure of $E_2$ is $2\pi$. The following conditions are necessary and sufficient for the existence of an $f\in H^\infty$ such that the radial limits of $f$ fail to exist precisely at $E_1$ and it has vanishing unrestricted limits precisely at $T\setminus E_2$:
\begin{enumerate}
\item $E_1$ is of type $G_{\delta\sigma}$ and of measure zero.
\item $E_2$ is of type $F_\sigma$.
\end{enumerate}
\end{theorem}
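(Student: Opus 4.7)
The plan is to deduce sufficiency by combining the two building blocks already available: a Kolesnikov function for $E_1$ and the function from Theorem 4 applied to $F:=T\setminus E_2$. Necessity should be brief. That $m(E_1)=0$ is immediate from Fatou's radial-limit theorem for $H^\infty$; that $E_1$ must be $G_{\delta\sigma}$ is the classical converse of Kolesnikov's Theorem noted in the excerpt; and that $E_2$ is $F_\sigma$ I would prove by observing that $\zeta\in T\setminus E_2$ iff for every $n$ there is an open neighborhood $U$ of $\zeta$ in $\C$ with $|f|<1/n$ throughout $U\cap D$. For each fixed $n$ this is an open condition on $\zeta$, so $T\setminus E_2$ is a countable intersection of sets open in $T$, hence $G_\delta$.

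For sufficiency, the hypothesis $m(E_2)=2\pi$ makes $F=T\setminus E_2$ a $G_\delta$ set of measure zero, so Theorem 4 yields a nonvanishing $h\in H^\infty$ with nonzero radial limits on $E_2$ and vanishing unrestricted limits on $F$. Kolesnikov's Theorem applied to $E_1$ gives a bounded analytic $g$ whose radial limits fail precisely on $E_1$. The naive candidate $f=gh$ is not quite right: nothing prevents $g$ from having radial limit zero at some point of $E_2\setminus E_1$, which would make $f$ have a vanishing radial limit there and could violate the requirement that vanishing unrestricted limits occur \emph{only} on $T\setminus E_2$.

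The fix I would use is a translation trick: pick $c\in\C$ with $|c|>\|g\|_\infty$ and set $f:=h\cdot(g+c)$. This shift does not change the set of points where $g+c$ has a radial limit, but now every such limit lies outside the disc of radius $|c|-\|g\|_\infty$ about the origin, hence is nonzero. I would then verify the three required properties by splitting $T$ into three pieces. On $T\setminus E_2$, $h$ has vanishing unrestricted limit and $g+c$ is bounded, so $f$ has vanishing unrestricted limit. On $E_2\setminus E_1$, both $h$ and $g+c$ have nonzero radial limits, so $f$ has a nonzero radial limit and therefore cannot have a vanishing unrestricted limit. On $E_1$, $h$ has a nonzero radial limit while $g+c$ has none, so $f$ has no radial limit either (otherwise dividing by the nonzero radial limit of $h$ would produce one for $g+c$), and in particular no unrestricted limit.

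The main obstacle is really just the shift trick above; without it the straightforward product $gh$ fails the ``precisely'' clause in the statement, and ensuring separately that every radial limit of $g$ on $E_2\setminus E_1$ is nonzero would require a Kolesnikov-type refinement that the excerpt does not provide. Once the shift is in place, the rest is the three-case bookkeeping outlined above.
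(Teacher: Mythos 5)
Your proof is correct and follows essentially the same route as the paper: take the Kolesnikov function for $E_1$, add a constant of large modulus so that its radial limits are never zero, and multiply by the Theorem 4 function associated with the $G_\delta$ measure-zero set $T\setminus E_2$, then check the three cases. Your necessity argument that $E_2$ is $F_\sigma$ (writing $T\setminus E_2$ as a countable intersection of the open sets where $|f|<1/n$ near the boundary point) is a minor variant of the paper's oscillation-set argument and is, if anything, better adapted to the \emph{vanishing} unrestricted-limit condition.
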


\begin{proof}
\textbf{(Necessity)} It is well known that the exceptional set of points in $T$ such that $f$ fails to have radial
 limits is of type $G_{\delta\sigma}$ and by {Fatou's Theorem} is of measure zero. 
 Also if $f$ has unrestricted limits on $T\setminus E_2$ then $E_2=\bigcup_{n=1}^{\infty} F_n$ where $F_n$ are the closed sets consisting 
 of all points in $E_2$ any neighborhood of which contains points $t_1,t_2\in D$ such that $|f(t_1)-f(t_2)|>\frac{1}{n}$. Since clearly each $F_n$ is closed it follows that $E_2$ is of type $F_{\sigma}$ and the proof of necessity is complete.\\
\textbf{(Sufficiency)} Since $E_1$ is of type $G_{\delta\sigma}$ and of measure zero by {Kolesnikov's Theorem} there exists a bounded analytic function $f_1$ that fails to have radial limits exactly on $E_1$. By adding a suitable constant we may assume that $f_1$ does not have vanishing radial limits anywhere on $T$.\\
 Now since $E_2$ is $F_\sigma$ it follows $T\setminus E_2$ is of type $G_\delta$ and therefore by {Theorem 4} there exists a non zero bounded analytic function $f_2$ such that $f_2=0$ exactly on $T\setminus E_2$ and such that the radial limits of $f_2$ exist at all points of $T$.
The function $F\equiv f_1f_2$ satisfies the conditions of the theorem hence the proof is complete. 
\end{proof}

\noindent Now recall that by the Factorization Theorem (see p.67, \cite{HO}) every function in $H^1$ can be factored uniquely into the product of three different types of functions. \\
 
 \noindent\textbf{Definition 1.}
A \emph{Singular inner function} is an analytic zero free function $S$ in the unit disc, such that $|S(z)|\leq 1$, $S(0)>0$ and $|S(e^{i\theta})|=1$ almost everywhere on the unit circle. An \emph{outer function} is an analytic function $F$ in the unit disc of the form $$F(z)=\lambda \exp\left[ \frac{1}{2\pi}\int_{-\pi}^{\pi}\frac{e^{i\theta}+z}{e^{i\theta}-z}k(\theta)d\theta\right]$$
where $k$ is a real valued integrable function on the circle and $\lambda$ is a complex number of modulus $1$.\\
 
\noindent The factorization theorem states that every $f\in H^1$ can be factored uniquely as $f=BSF$ where $B$ is a Blaschke product, $S$ a singular inner function and $F$ an outer function. Factoraization Theorem allows us to analyze a function in $H^1$ by looking at each of its factors independently. Our goal now is to solve an important case of the unification problem for one of the three types of functions comprising the factorization, namely Blaschke products. One might then continue the research by examining the problem under the lens of the remaining types of functions in the factorization theorem. Hopefully, by examining the problem with each type of functions one will indeed manage to solve the problem for a general function in $H^\infty$.\vspace{0.12in} It turns out that the particular case of the unification problem we will solve for Blaschke products also extends an important Theorem by Colwell \cite{PE}.\\

Before we proceed we will recall a well know Theorem in regarding the boundary behavior of bounded analytic function and then look at some important results about Blaschke products that turn out to be useful for solving the specific case of the unification problem for Blaschke products.

\noindent The following theorem is well known in analytic function theory.
\begin{theorem}[Fatou's Theorem]
Let $f\in H^\infty$. Then the radial limits of $f$ exist on $T$ except perhaps for a subset $E$ of measure zero.
\end{theorem}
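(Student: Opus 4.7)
The plan is to represent $f$ as the Poisson integral of a bounded boundary function $g$ on $T$, and then invoke the classical result (due to Fatou and Lebesgue) that the Poisson integral of any $L^1$ function on $T$ converges radially to the function at almost every point.

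First, since $|f| \le M$ on $D$ for some constant $M$, for any sequence $r_n \nearrow 1$ the dilates $f_n(e^{i\theta}) := f(r_n e^{i\theta})$ form a norm-bounded family in $L^\infty(T)$. Using the duality $L^\infty(T) = (L^1(T))^*$ together with the Banach--Alaoglu theorem, I would extract a subsequence $f_{n_k}$ converging weak-$*$ to some $g \in L^\infty(T)$ with $\|g\|_\infty \le M$. Next, the Poisson integral formula on the disc of radius $r_{n_k}$ gives, for $|z| < r_{n_k}$,
$$
f(z) = \frac{1}{2\pi}\int_{-\pi}^{\pi} P\!\left(\frac{z}{r_{n_k}}, e^{i\theta}\right) f(r_{n_k} e^{i\theta}) \, d\theta,
$$
where $P$ denotes the unit-disc Poisson kernel. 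For each fixed $z \in D$, the rescaled kernels $P(z/r_{n_k}, \cdot)$ converge uniformly on $T$ to $P(z, \cdot)$ as $k \to \infty$; combining this uniform convergence with the weak-$*$ convergence of the densities yields in the limit
$$
f(z) = \frac{1}{2\pi} \int_{-\pi}^{\pi} P(z, e^{i\theta}) g(e^{i\theta}) \, d\theta,
$$
so $f$ is the Poisson integral of $g \in L^\infty(T) \subset L^1(T)$.

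To finish, I would appeal to the classical theorem that for any $g \in L^1(T)$ the Poisson integral $P[g]$ converges radially to $g$ almost everywhere on $T$. This immediately yields radial limits of $f$ at almost every point of $T$, as required. The main technical ingredient is precisely this last convergence statement, which rests on the fact that the Poisson kernel is an approximate identity together with a Lebesgue differentiation argument; in the present context I would simply cite it as a standard classical fact rather than reprove it in detail.
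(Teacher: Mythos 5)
The paper states Fatou's Theorem as a classical, well-known result and gives no proof of it, so there is no in-paper argument to compare against. Your proposal is a correct rendition of the standard textbook proof: the weak-$*$ compactness argument producing a boundary function $g\in L^\infty(T)$ with $f=P[g]$ is sound (the key point, which you handle, is pairing the weak-$*$ convergence of the dilates with the $L^1$-norm convergence of the rescaled Poisson kernels), and the a.e.\ radial convergence of $P[g]$ for $g\in L^1(T)$ is the classical Fatou--Lebesgue theorem you cite. This is exactly the argument found in the standard references (e.g.\ Hoffman's book, which the paper cites), so nothing further is needed.
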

\noindent The following theorem has been proven recently and its proof can be found in \cite{Sp}.

\begin{theorem}
Let E be a set on $T$. Then there exists a Blaschke product which has no radial limits on $E$ but has unrestricted limit at each point of $T\setminus E$ if and only if $E$ is a closed set of measure zero.
\end{theorem}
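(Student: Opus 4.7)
The theorem splits naturally into necessity and sufficiency, with the latter carrying the bulk of the work.

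\emph{Necessity.} Measure zero is immediate from Fatou's Theorem (Theorem~7 above) applied to $B\in H^\infty$. For closedness, let $\{z_n\}$ denote the zero sequence of $B$ and set $A=\overline{\{z_n\}}\cap T$, a closed subset of $T$. A standard observation is that a Blaschke product extends analytically across any open arc of $T$ disjoint from the closure of its zero set, so $B$ has unrestricted limits at every point of $T\setminus A$; hence $E\subseteq A$. Conversely, suppose $t\in A$ and assume for contradiction that $B$ has an unrestricted limit $L$ at $t$. A subsequence of the zeros converges to $t$, forcing $L=0$, so by continuity of the extension $|B(z)|<\varepsilon$ on some $\{|z-t|<\delta\}\cap D$; passing to radial limits gives $|B^\ast(e^{i\theta})|\le\varepsilon$ for almost every $e^{i\theta}$ near $t$, contradicting $|B^\ast|=1$ a.e. Therefore no unrestricted limit can exist on $A$, giving $A\subseteq E$. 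Hence $E=A$ is closed.

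\emph{Sufficiency.} Given a closed set $E\subseteq T$ of measure zero, I would construct a Blaschke product whose zero set clusters precisely on $E$ and destroys every radial limit at points of $E$. For each integer $n$, cover $E$ by a finite collection of open arcs $\{I_{n,k}\}_k$ of total length less than $2^{-n}$ with $\bigcap_n\bigcup_k I_{n,k}=E$; this is possible because $E$ is closed and has measure zero. In each arc choose a point $c_{n,k}\in E\cap\overline{I_{n,k}}$ and place a Blaschke zero at $z_{n,k}=(1-|I_{n,k}|)c_{n,k}$. The estimate $\sum_{n,k}(1-|z_{n,k}|)\le\sum_n 2^{-n}<\infty$ secures the Blaschke condition, and the structure of the cover forces the cluster set of $\{z_{n,k}\}$ on $T$ to be exactly $E$, so that $B$ extends analytically across $T\setminus E$ and in particular has unrestricted limits there.

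The main obstacle is showing that the product just built has no radial limit at any $t\in E$. For each such $t$, the covering property supplies a subsequence of arcs $I_{n_j,k_j}\ni t$ with $|c_{n_j,k_j}-t|\le|I_{n_j,k_j}|\to0$, so the associated zeros $z_{n_j,k_j}$ lie in a fixed Stolz cone at $t$ while $B$ vanishes on them. One must then exhibit a second sequence $w_j\to t$ in the same Stolz cone along which $|B(w_j)|\to1$; this is done by bounding $-\log|B|$ through the contributions of the individual Blaschke factors and choosing points hyperbolically far from every zero, the measure-zero hypothesis permitting the required separation. Once both sequences are in place, Lindel\"of's theorem forbids a non-tangential and hence the radial limit at $t$. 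The delicate calibration between the depths $1-|z_{n,k}|$ and the arc lengths $|I_{n,k}|$, which must make the zeros dense enough to block radial limits everywhere on $E$ while keeping $|B|$ close to $1$ at competitor points in each Stolz cone, is the technically most demanding part of the argument.
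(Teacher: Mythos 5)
Your necessity argument is correct and essentially the standard one: measure zero from Fatou's Theorem, and the identification of $E$ with the cluster set of the zeros on $T$ (analytic continuation across arcs free of zeros for one inclusion; the $|B^\ast|=1$ a.e.\ contradiction for the other). That half is fine.

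The sufficiency half, however, contains a genuine gap, and it sits exactly where the real content of the theorem lies. Your construction (one zero per covering arc per scale, at depth equal to the arc length) does place zeros in a Stolz cone at every $t\in E$ and does make the cluster set equal to $E$, but the claim that the resulting product has \emph{no radial limit at any point of} $E$ is asserted, not proved. You correctly identify that you must produce, for every $t\in E$, a second sequence $w_j\to t$ in the cone with $|B(w_j)|\to 1$, but the sentence ``this is done by bounding $-\log|B|$ through the contributions of the individual Blaschke factors and choosing points hyperbolically far from every zero'' is precisely the estimate that needs to be carried out, and it is not automatic: one must control the aggregate contribution of \emph{all} zeros (including those attached to distant arcs and those stacked at different scales over $t$), and nothing in your setup guarantees that points hyperbolically far from every zero exist arbitrarily close to every $t\in E$ inside the cone. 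Declaring this ``the technically most demanding part'' without executing it leaves the theorem unproved. Note also that the paper does not attempt this bare-hands construction: it defers to \cite{Sp}, where the sufficiency is obtained by combining Berman's theorem (Theorem 9, giving Blaschke products $B_0\to 0$ and $B_1\to 1$ radially exactly on $E$, with $B_0$ analytic across $T\setminus E$) with Nicolau's theorem (Theorem 10), which produces a Blaschke product $I$, analytic across $T\setminus E$, satisfying $I(re^{it})-g(re^{it})\to 0$ on $E$ for an arbitrary $g$ in the unit ball of $H^\infty$; taking $g$ to have no radial limits on $E$ (such $g$ exists since a closed null set is in particular $G_{\delta\sigma}$ of measure zero) finishes the proof. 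If you want to keep your direct construction, you must actually prove the $|B(w_j)|\to 1$ estimate; otherwise, route the sufficiency through Theorems 9 and 10 as the cited source does.
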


\noindent The proof of Theorem $8$ uses Fatou's Theorem and some results on the boundary behavior of Blaschke products due to R.D. Berman \cite{BE} and A. Nicolau \cite{NI}.

\noindent The next theorem is due to Berman (\cite{BE}, p. 250).

\begin{theorem}[R.Berman] Let $E$ be a subset of the unit circle of zero Lebesgue measure and of type $F_\sigma$ and $G_\delta$. Then there exist Blaschke products $B_0$ and $B_1$ such that:\\
(i) $B_0$ extends analytically to $T\setminus \overline{E}$ and $\lim_{r\rightarrow 1} B_0(re^{it})=0$ if and only if $e^{it}\in E$;\\
(ii) $\lim_{r\rightarrow 1} B_1(re^{it})=1$ if and only if $e^{it}\in E$.  
\end{theorem}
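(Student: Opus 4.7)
The plan is to construct each Blaschke product by distributing zeros in concentric layers $\{|z|=r_n\}$ with $r_n\uparrow 1$, with the arguments of the zeros at each layer confined to successively smaller open neighborhoods of $E$. The $G_\delta$ hypothesis lets one write $E=\bigcap_n U_n$ with $U_n$ decreasing open, and I would refine this so $U_n\subseteq V_n$ for a nested family of open sets $V_n$ with $\bigcap_n V_n=\overline{E}$. This will ensure that the accumulation points of the zeros lie exactly in $\overline{E}$, giving analytic extension of $B_0$ and $B_1$ to $T\setminus\overline{E}$.

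For part (i), I would choose in each $U_n$ a finite $\epsilon_n$-net $\{\zeta_{n,j}\}_{j=1}^{N_n}$ with $\epsilon_n\ll 1-r_n$, and take $a_{n,j}=r_n\zeta_{n,j}$ as the zeros of $B_0$, with $r_n$ tending to $1$ fast enough that $\sum_n N_n(1-r_n)<\infty$. At $e^{it}\in E$, there is at every level a zero whose argument is within $\epsilon_n$ of $e^{it}$, producing a Blaschke factor of size $O(\epsilon_n/(1-r_n))$ at $z=r_n e^{it}$; this forces $B_0(re^{it})\to 0$. At $e^{it}\in\overline{E}\setminus E$, the point lies outside some $U_N$, so zeros from levels $n\geq N$ have angular distance from $e^{it}$ bounded below by some $\delta>0$; then $\sum_{n,j}(1-|a_{n,j}|)/|e^{it}-a_{n,j}|$ converges, and a classical Frostman-type criterion gives a radial limit of modulus $1$ at $e^{it}$. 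Combined with the modulus-$1$ boundary values of the analytic extension on $T\setminus\overline{E}$, this shows $B_0\to 0$ precisely on $E$.

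For part (ii), I would run the same layered construction but place zeros in symmetric pairs $a_{n,j}^{\pm}=r_n\zeta_{n,j}e^{\pm i\psi_n}$ with $\psi_n$ small relative to $1-r_n$. A direct computation shows that the two Blaschke factors associated with a pair symmetric about a point $\zeta\in T$ multiply to $1$ at $z=\zeta$, so the infinite product $B_1$ satisfies $B_1(re^{it})\to 1$ for $e^{it}\in E$, while the Frostman-sum argument again pins the limit off $1$ at points of $\overline{E}\setminus E$. The hardest part of the whole argument is precisely this reverse direction at $\overline{E}\setminus E$: the zeros do cluster at these boundary points, so one has to show their approach is only tangential, and the $G_\delta$ representation $E=\bigcap_n U_n$ is what delivers this by pushing zeros away in argument from any point outside $E$.
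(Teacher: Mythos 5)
First, a structural point: the paper itself does not prove this statement --- Theorem 9 is quoted from Berman (\cite{BE}, p.~250) and used as a black box --- so your sketch can only be judged on its own terms, and on those terms it has genuine gaps. For (i), placing one zero within $\epsilon_n$ of $e^{it}$ at the single radius $r_n$ only makes $|B_0(r_n e^{it})|$ small along the sequence $\lbrace r_n\rbrace$; that gives $\liminf_{r\rightarrow 1}|B_0(re^{it})|=0$, not $\lim_{r\rightarrow 1}B_0(re^{it})=0$, since between consecutive layers the modulus can return close to $1$ (when the Frostman sum diverges the radial limit need not exist at all). Forcing the radial limit to vanish at every point of $E$ requires zeros present at essentially all radii near each such point, a far denser placement than one $\epsilon_n$-net per layer. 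Worse, your treatment of $\overline{E}\setminus E$ is inconsistent: a point $e^{it}\in\overline{E}\setminus E$ is a limit of points of $E\subset U_n$, so every $U_n$ comes arbitrarily close to $e^{it}$ in argument and there is no lower bound $\delta>0$ on the angular distance to the level-$n$ zeros. You concede this difficulty in your last sentence, but ``pushing zeros away in argument from any point outside $E$'' is precisely what cannot be done, and tangential approach alone does not make $\sum(1-|a|)/|e^{it}-a|$ converge. Arranging divergence strong enough to kill the limit at every point of $E$ while securing convergence at every point of $\overline{E}\setminus E$ is the real content of Berman's theorem, and it is where the $F_\sigma$ hypothesis must enter; your sketch never uses it, yet (as the paper's Proposition 1 indicates) the target set must be both $F_\sigma$ and $G_\delta$, so a construction invoking only the $G_\delta$ structure cannot be complete.

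For (ii) there is a concrete quantitative error. With $\psi_n$ small relative to $1-r_n$, the zeros $r_n\zeta_{n,j}e^{\pm i\psi_n}$ lie in a Stolz angle at $\zeta_{n,j}$ and at every point of $E$ within $\epsilon_n$ of it; the corresponding factor at $z=r_n\zeta_{n,j}$ has modulus of order $\psi_n/(1-r_n)$, which is small, so $\liminf_{r\rightarrow 1}|B_1(r\zeta_{n,j})|=0$ and the limit cannot be $1$. You need the opposite regime: tangential zeros with $\psi_n\gg 1-r_n$ and $\sum_n(1-r_n)/\psi_n<\infty$, so that the Frostman sum converges at each point of $E$, the radial limit exists with modulus one, and it equals the product of the boundary values of the factors --- which your symmetric pairing then correctly evaluates to $1$. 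Even after that correction, the problem of controlling the behavior at $\overline{E}\setminus E$ reappears exactly as in part (i).
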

\noindent Now we formulate a theorem of Nicolau (see \cite{NI}, Proposition on p. 251).
\begin{theorem}[A. Nicolau]
Let $E$ be a subset of the unit circle. Assume that there exist a Blaschke product $B_0$ that extends analytically to $T\setminus \overline{E}$ with $\lim_{r\rightarrow 1}B_0(re^{it})=0$ for $e^{it}\in E$, and an analytic function $f_1$ in the unit ball of $H^\infty$, $f_1\neq 1$, such that $\lim_{r\rightarrow 1}f_1(re^{it})=1$ for $e^{it}\in E$. Then for each analytic function $g$ in the unit ball of $H^\infty$, there exists a Blaschke product $I$ that extends analytically to $T\setminus \overline{E}$, such that $$\lim_{r\rightarrow 1}[I(re^{it})-g(re^{it})]=0 \text{ for } e^{it}\in E$$
\end{theorem}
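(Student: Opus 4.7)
The plan is to construct $I$ as the limit of an iterated sequence $(I_n)$ of Blaschke products, each extending analytically to $T\setminus\overline{E}$, such that the radial limits of $I_n$ on $E$ converge to those of $g$. The enabling observation is that both $B_0$ and $1-f_1$ are bounded analytic functions that vanish radially on $E$, so any correction of the form $B_0\cdot h$ or $(1-f_1)\cdot h$ with $h\in H^\infty$ is invisible on $E$ in the radial limit, providing flexibility to maneuver.

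As a preliminary step, I would replace $f_1$ by a Blaschke product with analogous boundary behavior on $E$. Extracting the inner factor of $1-f_1$, which still tends radially to $0$ on $E$ (off a null set), and then applying a Frostman shift $\tau_a$ for generic $|a|<1$, produces a Blaschke product $u$ that extends analytically to $T\setminus\overline{E}$ and has $1-u$ vanishing radially on $E$. Together with $B_0$, this gives two Blaschke products at our disposal, available as symmetric building blocks in the iteration.

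The iteration runs as follows. Inductively, suppose a Blaschke product $I_n$ extending analytically to $T\setminus\overline{E}$ satisfies $|I_n-g|\le 2^{-n}$ radially on $E$. Form the error $\delta_n=g-I_n$, renormalize into the unit ball of $H^\infty$, and seek a Blaschke product correction $\eta_n$ that agrees with $\delta_n$ radially on $E$ up to error $2^{-(n+1)}$; then update $I_n$ to $I_{n+1}$ by \emph{absorbing} $\eta_n$ as a zero-set redistribution of $I_n$, dampened by a sufficiently high power of $B_0$ (and, where needed, multiplied by a factor in $u$) so that the modification is invisible on $E$ in the radial limit. After the iteration, the uniform-on-compacts limit $I=\lim_n I_n$ is a Blaschke product with zeros accumulating only on $\overline{E}$, satisfying $I-g\to 0$ radially on $E$.

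The hard part is the absorption step at each stage, because a pointwise sum of Blaschke products is not a Blaschke product: one must translate an additive correction into a zero redistribution of $I_n$ rather than a function-level perturbation. The hypothesis on $B_0$ ensures that new zeros can be packed onto $\overline{E}$ in a Blaschke-summable fashion, so the limit $I$ is genuinely a Blaschke product (rather than merely an inner function) with zeros clustering only on $\overline{E}$; and the hypothesis on $f_1$, via its inner-Frostman replacement $u$, provides the complementary unit-modulus flexibility needed to preserve the inner-function structure through the absorption. Verifying that this bookkeeping can be carried out consistently through infinitely many iterations, while maintaining both the Blaschke condition $\sum(1-|a_k|)<\infty$ on the accumulating zeros and the analytic continuation to $T\setminus\overline{E}$, is the delicate technical heart of the argument.
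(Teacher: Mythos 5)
This statement is quoted in the paper as a known result of Nicolau (\cite{NI}, Proposition on p.~251) and the paper gives no proof of it, so there is no in-paper argument to compare against; your proposal has to stand on its own, and as written it does not. The first concrete problem is your preliminary reduction. The inner factor of $1-f_1$ need not tend radially to $0$ on $E$: if, say, $E=\{1\}$ and $f_1(z)=e^{z-1}$, then $1-f_1$ is zero-free with $\log|1-f_1|$ integrable and equal to the Poisson integral of its boundary values, so $1-f_1$ is outer and its inner factor is the constant $1$. Even granting that the inner factor $\Theta$ tends to $0$ on $E$, the Frostman shift $\tau_a(\Theta)=(\Theta-a)/(1-\bar a\Theta)$ then tends to $-a$ on $E$, not to $1$, so it does not give you a Blaschke product $u$ with $1-u$ vanishing on $E$; and there is no reason $\tau_a(\Theta)$ extends analytically across $T\setminus\overline{E}$, since $f_1$ carries no continuation hypothesis and its singular support can sit anywhere on $T$.

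The second and more serious problem is that the ``absorption step'' is never defined, and it is exactly where the content of the theorem lives. You correctly observe that a pointwise sum of Blaschke products is not a Blaschke product and that an additive correction must somehow be converted into a redistribution of zeros, but you give no operation that performs this conversion: $I_n+B_0^k\eta_n$ is not inner, and multiplying $I_n$ by a Blaschke factor perturbs its radial limits multiplicatively, not additively. Deferring this to ``delicate bookkeeping'' begs the question. Two further unjustified claims compound this: a normal (uniform-on-compacts) limit of Blaschke products need not be a Blaschke product, or even inner, without control of the Blaschke sums $\sum(1-|a_k|)$ of the accumulating zeros; and even if each $I_n$ satisfies $|I_n-g|\le 2^{-n}$ radially on $E$, the normal limit $I$ need not satisfy $I-g\to 0$ radially on $E$, since radial limits do not commute with normal limits absent uniform estimates near the boundary. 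To repair the argument you would need an explicit device that turns a target boundary function into an inner function with controlled zeros --- which is precisely what Nicolau's construction in \cite{NI} supplies and what your outline presupposes.
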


\noindent The following theorem is a corollary of the necessity part of Rene-Louis Baire's topological theorem for functions in the Baire first class (see \cite{B}, p. 462, Baire's Theorem) which we formulate for functions on the unit circle $T$.

\begin{theorem}[Baire's  Theorem] Let $f$, $f_n$ be defined on $T$ and let $f_n$ be continuous on $T$. If $\lbrace f_n\rbrace$ converges to $f$ at every point of $T$, then $f$ is continuous on a dense subset of $T$.
\end{theorem}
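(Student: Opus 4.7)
The plan is to prove the stronger classical conclusion that the set of points at which $f$ is continuous is not merely dense but a dense $G_\delta$. For each $\varepsilon > 0$ I would introduce the open set
\[
U_\varepsilon = \{t \in T : \text{some open arc } V \ni t \text{ satisfies } \sup_{s,s' \in V}|f(s)-f(s')| < \varepsilon\},
\]
and observe that $\bigcap_{n \ge 1} U_{1/n}$ is exactly the continuity set of $f$. Since $T$ is a complete metric space, the Baire category theorem will deliver density of this intersection as soon as each $U_{1/n}$ is individually dense. The problem therefore reduces to showing, for arbitrary $\varepsilon > 0$ and arbitrary nonempty open arc $W \subset T$, that $U_\varepsilon \cap W \ne \emptyset$.

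For this reduction I would apply Baire's theorem a second time, now on the complete metric space $\overline{W}$. Define
\[
F_m = \bigl\{ t \in \overline{W} : |f_i(t) - f_j(t)| \le \varepsilon/4 \text{ for all } i,j \ge m \bigr\}.
\]
Continuity of each $f_n$ makes $F_m$ closed in $\overline{W}$, and the pointwise convergence (which forces the sequence $\{f_n(t)\}$ to be Cauchy at each $t$) gives $\overline{W} = \bigcup_m F_m$. Baire category then forces some $F_{m_0}$ to have nonempty relative interior; this interior meets the open arc $W$, so we may pick $t_0 \in F_{m_0} \cap W$ together with an open neighborhood $V \subset W$ of $t_0$ contained in $F_{m_0}$. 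On $V$ one passes to the limit $j \to \infty$ to obtain $|f(t) - f_{m_0}(t)| \le \varepsilon/4$ for every $t \in V$; then the continuity of $f_{m_0}$ at $t_0$, combined with a standard triangle inequality, pins the oscillation of $f$ on a smaller neighborhood strictly below $\varepsilon$, proving $t_0 \in U_\varepsilon \cap W$.

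The main obstacle, and the real content of the theorem, is precisely this density step: extracting from the hypothesis $f_n \to f$ pointwise a local uniform approximation of $f$ by a single continuous function $f_{m_0}$. Baire's theorem applied on $\overline{W}$ is what effects this conversion. A secondary but easy point is the check that the relative interior of $F_{m_0}$ in $\overline{W}$ genuinely intersects $W$ rather than being pinned to the two endpoints of $\overline{W}$; this holds because any relatively open neighborhood of an endpoint of $\overline{W}$ in $\overline{W}$ automatically contains a sub-arc extending into $W$. Once density of every $U_\varepsilon$ is in hand, one invocation of Baire category on $T$ itself closes the argument.
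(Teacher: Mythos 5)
Your argument is correct: the decomposition of the continuity set as $\bigcap_n U_{1/n}$, the reduction of density of each $U_\varepsilon$ to a second Baire-category argument on $\overline{W}$ via the closed sets $F_m$, the passage to the limit $j\to\infty$ to get $|f - f_{m_0}|\le \varepsilon/4$ on $V$, and the final triangle-inequality estimate are all sound, and you correctly dispose of the endpoint subtlety for the relative interior in $\overline{W}$. Note, however, that the paper offers no proof of this statement at all --- it is quoted as a classical result with a reference to Bary's treatise --- so there is nothing in the text to compare against; your write-up supplies the standard classical proof and in fact yields the stronger conclusion that the continuity set is a dense $G_\delta$, which is more than the paper's statement requires.
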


\noindent\textbf{Definition 2.} Let $T$ be a complete metric space and $Z$ a Banach space. A function $f:T\rightarrow Z$ is said to be of the \emph{Baire first class}, if $f$ is the pointwise limit of a sequence of continuous functions.\\

\noindent The following variation of Baire's Theorem also holds (See \cite{S}, p. 986).

\begin{theorem}
Let $T$ and $Z$ be as above and $f:T\rightarrow Z$ be of the Baire first class, then for every closed set $C\subset T$, the function $f\mid C$ has a point of continuity in $C$.
\end{theorem}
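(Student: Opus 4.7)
The plan is to deduce this from the Baire category theorem applied to $C$, using the Cauchy property of the defining sequence $\{f_n\}$.

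Write $f = \lim_n f_n$ pointwise, with each $f_n : T \to Z$ continuous. Since $C$ is a closed subset of the complete metric space $T$, it is itself a complete metric space, hence a Baire space. For $\varepsilon > 0$ and $n \in \mathbb{N}$, I would introduce
\[
E_n^\varepsilon \;=\; \{\, x \in C : \|f_j(x) - f_k(x)\| \le \varepsilon \text{ for all } j,k \ge n \,\},
\]
which is closed in $C$ because it is the intersection of the closed sets $\{\|f_j - f_k\| \le \varepsilon\}$ over $j, k \ge n$. Since $\{f_n(x)\}$ converges in $Z$ at every point of $C$, it is Cauchy at every such point, so $C = \bigcup_n E_n^\varepsilon$. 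Letting $k \to \infty$ inside the defining inequality gives $\|f(x) - f_n(x)\| \le \varepsilon$ for every $x \in E_n^\varepsilon$.

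Next, I would prove that the set $G_\varepsilon$ of points of $C$ admitting a relatively open neighborhood on which $\mathrm{osc}(f|C) \le 4\varepsilon$ is open and dense in $C$. Openness is immediate from the definition. For density, fix any nonempty relatively open $V \subset C$ and set $F = \overline{V}^{\,C}$; then $F$ is closed in $C$, hence complete, and equals $\bigcup_n (E_n^\varepsilon \cap F)$. By Baire, some $E_{n_0}^\varepsilon \cap F$ has nonempty interior in $F$, say of the form $W \cap F$ with $W$ open in $C$. Since $V$ is dense in $F$, we can pick $x_0 \in W \cap V$; then the neighborhood $W \cap V$ of $x_0$ lies in $V$ and is contained in $E_{n_0}^\varepsilon$, so $\|f - f_{n_0}\| \le \varepsilon$ throughout it. Using continuity of $f_{n_0}$, shrink $W \cap V$ to a smaller neighborhood on which $f_{n_0}$ oscillates by at most $\varepsilon$; by the triangle inequality $f$ oscillates by at most $4\varepsilon$ there, so $x_0 \in G_\varepsilon \cap V$.

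To finish, apply Baire once more: $\bigcap_{k=1}^\infty G_{1/k}$ is a dense $G_\delta$ subset of $C$, and every point in it is a point of continuity of $f|C$ in $C$. In particular, such a point exists, which is what the theorem requires.

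The main obstacle I anticipate is the density step, not the closedness or the Cauchy observation. One must resist the temptation to apply Baire inside $V$ itself, which need not be complete; the trick is to work in the closed (hence complete) set $\overline{V}^{\,C}$ and then use that $V$ is dense in it to push the interior obtained from Baire back into $V$. Everything else is bookkeeping with the triangle inequality and the continuity of the approximants $f_n$.
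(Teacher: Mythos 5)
Your argument is correct and complete. The paper itself offers no proof of this statement---it is quoted from Stegall's paper with only a citation---so there is nothing internal to compare against; what you have written is the classical Baire-category proof that a Banach-space-valued function of the first Baire class on a complete metric space has a dense $G_\delta$ set of continuity points, applied to the complete space $C$ (with the approximants $f_n$ restricted to $C$, where they remain continuous). The two delicate points are handled properly: the sets $E_n^\varepsilon$ are closed because they are intersections over $j,k\ge n$ of preimages of closed sets under the continuous maps $x\mapsto \|f_j(x)-f_k(x)\|$, and the density of $G_\varepsilon$ is obtained by running Baire inside the complete set $\overline{V}^{\,C}$ rather than inside $V$ itself, then using the density of $V$ in its closure to push the interior produced by Baire back into $V$. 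Three cosmetic remarks. First, the triangle inequality actually yields oscillation at most $3\varepsilon$ rather than $4\varepsilon$; this is harmless, since any fixed multiple of $\varepsilon$ suffices for the final intersection argument. Second, you have tacitly---and correctly---restored the hypothesis, omitted in the statement but clearly intended from Definition 2, that $f$ is of the first Baire class; without it the claim is false (the characteristic function of a countable dense subset of $T$ is everywhere discontinuous). Third, you prove strictly more than is asked: a dense $G_\delta$ of continuity points of $f\mid C$ in $C$, whereas the statement requires only the existence of one such point, which is what the paper actually uses when it deduces that the discontinuity set is nowhere dense.
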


\noindent From that, it follows as a corollary that the points of discontinuity of $f$ are nowhere dense in $T$.
\\

\noindent We will use Theorem 8 from above and the corollary of Baire's Theorem 12 mentioned above to prove a special case of the unification problem which extends the main theorem proven by Collwel in \cite{PE}.

\begin{theorem}[Collwel's Theorem] Let $E\subset T$. There exists a Blaschke product $B(z;A)$ for which $B(e^{i\theta})=\lim_{r\rightarrow 1}B(re^{i\theta})$ is well-defined and of modulus one at every point of $T$, satisfying $A'=E$ where $A'$ is the set of accumulation points of the zero set $A$ of our Blaschke product $B(z;A)$, if and only if $E$ is closed and nowhere dense.
\end{theorem}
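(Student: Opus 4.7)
\noindent The plan is to prove both implications separately: for the necessity direction, to use the Baire-category/Poisson-integral technique that the paper emphasizes; for the sufficiency direction, to construct a Blaschke product explicitly whose zeros approach a countable dense subset of $E$ tangentially.

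\noindent\emph{Necessity.} Suppose such a $B$ exists. The set $E = A'$ is closed because the derived set of any subset of a metric space is closed. To show $E$ is nowhere dense, assume for contradiction that $E$ contains an open arc $I$. The boundary function $B^*(e^{i\theta}) := B(e^{i\theta})$ is the pointwise limit on $T$ of the continuous functions $\theta \mapsto B((1-1/n)e^{i\theta})$, so $B^*$ is of Baire first class. By the corollary of Baire's Theorem noted above (continuity points are dense), pick a continuity point $e^{i\theta_0}\in I$ of $B^*$. Since $B\in H^\infty$, $B$ equals the Poisson integral of $B^*$; a standard splitting of that integral (small near $\theta_0$ by continuity of $B^*$ there, small away from $\theta_0$ by concentration of $P_z$ as $z\to e^{i\theta_0}$) then shows $B$ extends continuously to $D\cup\{e^{i\theta_0}\}$ with value $L := B^*(e^{i\theta_0})$, and $|L|=1$ by hypothesis. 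But $e^{i\theta_0}\in E = A'$ supplies a sequence of zeros $a_n\in A$ with $a_n\to e^{i\theta_0}$, so $0 = B(a_n)\to L$, contradicting $|L|=1$.

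\noindent\emph{Sufficiency.} Let $E$ be closed and nowhere dense, and fix $\{e^{i\theta_k}\}_{k\geq 1}$ dense in $E$. For each $k$ define zeros $a_{k,j} = (1-r_{k,j})e^{i(\theta_k+s_{k,j})}$, $j\geq 1$, with $r_{k,j},s_{k,j}>0$, $s_{k,j}\to 0$ as $j\to\infty$, and $r_{k,j}/s_{k,j}\to 0$ (tangential approach), arranged so that $\sum_{k,j}r_{k,j}<\infty$ (Blaschke condition) and $\max_j|a_{k,j}-e^{i\theta_k}|\to 0$ rapidly in $k$. Let $A := \{a_{k,j}\}$ and let $B$ be the Blaschke product with zero set $A$. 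Then $A'=E$: the inclusion $E\subseteq A'$ holds since $A'$ is closed and contains each $e^{i\theta_k}$, while $A'\subseteq E$ holds because any accumulation point of $A$ is a limit of points $e^{i\theta_k}$ and hence lies in $\overline{\{e^{i\theta_k}\}}=E$. For $e^{i\theta}\notin E$ no zeros of $B$ accumulate at $e^{i\theta}$, so $B$ extends analytically across $e^{i\theta}$ and $|B^*(e^{i\theta})|=1$ automatically. For $e^{i\theta}\in E$, one verifies Frostman's criterion $\sum_{k,j}(1-|a_{k,j}|)/|e^{i\theta}-a_{k,j}|<\infty$, which guarantees the radial limit exists at $e^{i\theta}$ and has modulus~$1$.

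\noindent The main obstacle is the last step of the sufficiency argument: ensuring the Frostman sum converges at every $e^{i\theta}\in E$, even at points where infinitely many $e^{i\theta_k}$ cluster arbitrarily closely. I would handle this by decoupling the double sum---letting $r_{k,j}$ decay geometrically in both indices while $s_{k,j}$ decays only in $j$ at a rate with $s_{k,j}\gg r_{k,j}$---so that for any $e^{i\theta}\in T$ the contributions from far-away $k$ are absorbed into a bounded multiple of $\sum_{k,j}r_{k,j}$, while the contributions from close $k$ satisfy $r_{k,j}/|e^{i\theta}-a_{k,j}|\lesssim r_{k,j}/s_{k,j}$ term-by-term, which is again summable.
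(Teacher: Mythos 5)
Your necessity argument is correct and is essentially the one the paper itself gives (Remark 2): the boundary function $B(e^{i\theta})$ is of Baire first class, so its continuity points are dense in $T$, while no point of $A'$ can be a continuity point; hence $E=A'$ is closed and has empty interior. Your Poisson-integral step in fact supplies a justification that the paper only asserts, namely that every point of $A'$ is a genuine discontinuity of the boundary function. The sufficiency direction, however, has a real gap, and it is visible before checking any estimates: your construction never uses the hypothesis that $E$ is nowhere dense. It would apply verbatim to $E$ equal to a closed arc (which also has a countable dense subset), producing a Blaschke product with $A'=E$ and unimodular radial limits everywhere --- impossible by your own necessity argument. So the construction must break down, and it breaks down exactly at the Frostman step.

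Concretely, the bound $r_{k,j}/|e^{i\theta}-a_{k,j}|\lesssim r_{k,j}/s_{k,j}$ needs $|e^{i\theta}-a_{k,j}|\gtrsim s_{k,j}$, which fails whenever $\theta$ lies within $O(r_{k,j})$ of the argument $\theta_k+s_{k,j}$ of a zero; there the term is comparable to $1$. Because your zeros sit over the points $\theta_k+s_{k,j}$ with $\theta_k\in E$, nothing prevents infinitely many zeros from casting their shadow on a single point of $E$. For example, let $E=\lbrace e^{i\theta_0}\rbrace\cup\lbrace e^{i(\theta_0-2^{-j})}: j\geq 1\rbrace$ (closed, countable, nowhere dense), enumerate the dense subset with $\theta_j=\theta_0-2^{-j}$, and take $s_{k,j}=2^{-j}$ as in your decoupling: then $a_{j,j}=(1-r_{j,j})e^{i\theta_0}$ for every $j$, so $B$ vanishes along a sequence on the radius to $e^{i\theta_0}$ and each of these zeros contributes a term equal to $1$ to the Frostman sum at $e^{i\theta_0}$, which diverges. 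Perturbing the $s_{k,j}$ rescues this one example but not a general closed nowhere dense $E$, which may be uncountable and of positive measure. The repair --- and the construction underlying Colwell's proof via Frostman's condition (1) --- is to place the arguments of the zeros inside the complementary arcs of $E$ and demand that $1-|a_n|$ be much smaller, summably, than $\mathrm{dist}(\arg a_n, E)$; then every Frostman term at every $e^{i\theta}\in E$ is at most $(1-|a_n|)/\mathrm{dist}(\arg a_n,E)$, uniformly summable, while the density of $T\setminus E$ (this is precisely where nowhere-density enters) lets those arguments accumulate at every point of $E$, giving $A'=E$.
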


\noindent Now we are ready to state and prove the following theorem which extends Collwel's theorem and also provides a different and more elegant approach in proving the necessity part of Collwel's Theorem.
\\

\noindent\textbf{Remark 1.}
\noindent Firstly we note that Collwel's Theorem can be used to provide a significant simplification of the proof of the sufficiency part of Theorem 1 found in (\cite{PI}, p.5). The sufficiency part can be stated as the following Theorem:
\begin{theorem}[Lohwater and Piranian] Let $K$ be an $F_\sigma$ set of first category in the unit circle $T$. Then there exists a bounded analytic function $\Phi(z)\in H^\infty$ such that the radial limits of $\Phi(z)$ exists everywhere on $T$ and has unrestricted limits exactly on $T\setminus K$.
\end{theorem}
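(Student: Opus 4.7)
The plan is to exploit Collwel's Theorem to manufacture Blaschke products with unimodular boundary values whose zeros accumulate on prescribed closed nowhere dense sets, and then to assemble them into a weighted series whose weights decay fast enough that the oscillation of any one $B_{n_0}$ cannot be erased by the remaining terms. First, since $K$ is $F_\sigma$, I would write $K=\bigcup_{n\ge 1}K_n$ with each $K_n$ closed. Each $K_n$ inherits first category from $K$; being closed in the complete metric space $T$, the Baire category theorem forces $K_n$ to be nowhere dense. Collwel's Theorem then yields, for each $n$, a Blaschke product $B_n$ whose zero set $A_n$ satisfies $A_n'=K_n$ and for which the radial limit $B_n(e^{i\theta})$ exists and has modulus one at every point of $T$.

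I would next pick positive weights $c_n$ with $\sum_n c_n<\infty$ and with $c_{n_0}>2\sum_{m>n_0}c_m$ for every $n_0$; for example $c_n=4^{-n}$ suffices. Setting $\Phi=\sum_n c_nB_n$, uniform convergence on $\overline{D}$ places $\Phi$ in $H^\infty$, and dominated convergence applied to $\Phi(re^{i\theta})=\sum_n c_nB_n(re^{i\theta})$ as $r\to 1$ supplies a radial limit $L(\theta)=\sum_n c_nB_n(e^{i\theta})$ at every point of $T$.

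For $e^{i\theta}\in T\setminus K$, no zeros of any $B_n$ accumulate at $e^{i\theta}$, and since $|B_n|\equiv 1$ on $T$ the Schwarz reflection principle extends each $B_n$ analytically across an arc containing $e^{i\theta}$. I would then run a standard $\varepsilon/2$ argument---truncate the tail using $\sum c_n<\infty$ and use continuity at $e^{i\theta}$ on $\overline{D}$ of the finite leading partial sum---to conclude that $\Phi$ has an unrestricted limit there. For $e^{i\theta}\in K$, let $n_0$ be the least index with $e^{i\theta}\in K_{n_0}$ and pick zeros $a_k$ of $B_{n_0}$ with $a_k\to e^{i\theta}$. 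Writing
\[
\Phi(a_k)-L(\theta)=-c_{n_0}B_{n_0}(e^{i\theta})+\sum_{m<n_0}c_m\bigl[B_m(a_k)-B_m(e^{i\theta})\bigr]+\sum_{m>n_0}c_m\bigl[B_m(a_k)-B_m(e^{i\theta})\bigr],
\]
the middle sum tends to $0$ (each $B_m$, $m<n_0$, is continuous at $e^{i\theta}$ by the same reflection argument) and the last sum is bounded in modulus by $2\sum_{m>n_0}c_m$; hence
\[
\liminf_{k\to\infty}|\Phi(a_k)-L(\theta)|\;\ge\;c_{n_0}-2\sum_{m>n_0}c_m\;>\;0,
\]
so $\Phi$ has no unrestricted limit at $e^{i\theta}$.

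The hard part, and the reason the naive implementation breaks, is the choice of weights. The familiar recipe $c_n=2^{-n}$ from the sketch of Theorem 5 is not strong enough here: at a point $e^{i\theta}$ belonging to several $K_m$'s simultaneously, the drop of $B_{n_0}$ between its zeros (where it vanishes) and $T$ (where $|B_{n_0}|=1$) could in principle be cancelled by the combined tail $\sum_{m>n_0}2^{-m}=2^{-n_0}$. Imposing $c_{n_0}>2\sum_{m>n_0}c_m$ is precisely what forces a genuine gap and yields the $K$ versus $T\setminus K$ dichotomy simultaneously with the existence of radial limits everywhere.
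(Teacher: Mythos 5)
Your proposal is correct and follows the same overall strategy as the paper: decompose $K=\bigcup_n K_n$ into closed nowhere dense sets, apply Collwel's Theorem to each $K_n$ to obtain a Blaschke product $B_n$ with unimodular radial limits everywhere on $T$ and zeros accumulating exactly on $K_n$, and sum a weighted series. The substantive difference lies in the weights and in what is actually verified at points of $K$. The paper takes $\Phi=\sum_i B(z,A_i)/i^2$ and simply asserts that $\Phi$ fails to have unrestricted limits on $K$; it gives no argument that the oscillation of $B_{n_0}$ at a point of $K_{n_0}$ survives in the sum, and with weights $1/i^2$ the crude tail bound $2\sum_{m>n_0}1/m^2\sim 2/n_0$ dwarfs the main term $1/n_0^2$, so the conclusion does not follow from the estimates on display (the same issue is latent in the $2^{-n}$ sketch of Theorem 5). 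Your condition $c_{n_0}>2\sum_{m>n_0}c_m$ (satisfied by $c_n=4^{-n}$), combined with taking the \emph{least} index $n_0$ with $e^{i\theta}\in K_{n_0}$ so that the finitely many preceding terms are continuous at $e^{i\theta}$ and drop out in the limit, and with evaluating along the zeros $a_k$ of $B_{n_0}$ where that factor vanishes, yields the explicit lower bound $\liminf_k|\Phi(a_k)-L(\theta)|\ge c_{n_0}-2\sum_{m>n_0}c_m>0$ and thereby closes the gap. In short: same construction, but your version supplies the quantitative separation argument that the paper's choice of weights cannot deliver, and it is the version that should be recorded.
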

\begin{proof}
Since $K$ is $F_\sigma$ and of first category, it follows $K=\bigcup_{i=1}^\infty K_i$ where $K_i$ are closed nowhere dense subsets of $C$. Then for each $i$, by Collwel's Theorem there exists a Blaschke product $B(z,A_i)$ that has radial limits of modulus $1$ everywhere on $T$ and has unrestricted limits of modulus $1$ exactly on $T\setminus K_i$.\\
Now since for all $z\in D$ $\mid B(z,A_i)\mid<1$, it follows that $\sum_{i=1}^\infty \frac{\mid B(z,A_i)\mid}{i^2}<\frac{\pi^2}{6}$, thus the infinite sum $\sum_{i=1}^\infty(\frac{B(z,A_i)}{i^2})$ converges uniformly to an analytic function $\Phi(z)$ on $D$. Now since each $B(z,A_i)$ has radial limits everywhere on $T$ and unrestricted limits of modulus $1$ exactly on $T\setminus K_i$, it follows that $\Phi(z)$ has radial limits everywhere on $T$ and unrestricted limits exactly on the $G_\delta$ set $T\setminus K$. The proof is complete.
\end{proof}

\noindent The sufficiency part of Collwel's Theorem in \cite{PE}, that is for each $E\subset T$ that is closed and nowhere dense there exists a Blaschke product $B(z;A)$ with $A'=E$ is based on the following result due to Frostman.

\begin{theorem}[Frostman's condition] Let $A$ be a sequence of points inside the unit disc $D$ and let $B(z,A)$ denote the Blaschke product whose zero set is $A$. Then a necessary and sufficient condition that $\lim_{r\rightarrow 1}B(re^{i\theta^0})=L$ where $|L|=1$ is that \begin{equation}
\sum_A\frac{1-|a|}{|e^{i\theta}-a|}<\infty.
\end{equation}
\end{theorem}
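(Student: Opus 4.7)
The plan is to study the Blaschke product factorwise via the logarithm of its modulus. Writing $B(z,A) = \prod_n b_n(z)$ with elementary Blaschke factors $b_n(z) = \frac{|a_n|}{a_n}\cdot\frac{a_n-z}{1-\bar{a}_nz}$, the foundational identity I will exploit is
\[
1 - |b_n(z)|^2 = \frac{(1-|a_n|^2)(1-|z|^2)}{|1-\bar{a}_n z|^2}.
\]
Specialized to $z = re^{i\theta_0}$, and using the comparison $|1-\bar{a}_n re^{i\theta_0}| \asymp (1-r) + |e^{i\theta_0}-a_n|$ for $r$ near $1$, this gives the explicit estimate $1 - |b_n(re^{i\theta_0})|^2 \asymp \frac{(1-|a_n|)(1-r)}{\bigl((1-r)+|e^{i\theta_0}-a_n|\bigr)^{2}}$. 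My strategy is to translate the statement that $|L|=1$ into $\sum_n \log|b_n(re^{i\theta_0})|^2 \to 0$ as $r \to 1^-$, and then compare this series term-by-term with the Frostman sum.

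For \textbf{sufficiency}, I would assume the Frostman sum converges. The estimate above is bounded term-by-term by a constant multiple of $\frac{1-|a_n|}{|e^{i\theta_0}-a_n|}$, so dominated convergence (applied to the series in $n$ as $r \to 1^-$) yields $\sum_n (1-|b_n(re^{i\theta_0})|^2) \to 0$, hence $|B(re^{i\theta_0})| \to 1$. To upgrade convergence of the modulus to existence of the radial limit itself, I would observe that for any $\varepsilon > 0$ the tail $\prod_{n>N} b_n$ has modulus within $\varepsilon$ of $1$ on a terminal radial segment (uniformly in $r$) while the finite partial product $\prod_{n\le N} b_n$ is analytic in a neighborhood of $e^{i\theta_0}$; so both the modulus and the argument of $B(re^{i\theta_0})$ are Cauchy as $r \to 1^-$.

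For \textbf{necessity}, starting from $\log|B(re^{i\theta_0})|^2 \to 0$, I need to recover $\sum_n \frac{1-|a_n|}{|e^{i\theta_0}-a_n|}<\infty$. My plan is a dyadic decomposition of the zeros: group $\{a_n\}$ by the annuli $A_k=\{a: 2^{-k-1} \le |e^{i\theta_0}-a| < 2^{-k}\}$, and for each $k$ evaluate at $r_k = 1 - 2^{-k}$, so that on $A_k$ the lower estimate becomes $1 - |b_n(r_k e^{i\theta_0})|^2 \gtrsim (1-|a_n|)/|e^{i\theta_0}-a_n|$. Combining $-\log|B(r_k e^{i\theta_0})|^2 \gtrsim \sum_{a_n\in A_k}\frac{1-|a_n|}{|e^{i\theta_0}-a_n|}$ and summing over $k$, together with the hypothesis that the left-hand side tends to $0$, should give a finite bound on the whole Frostman sum.

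The main obstacle, as flagged, is the necessity direction: one needs to go from a single assumption on the diagonal limit $r \to 1$ to a control on a sum over all zeros in all dyadic annuli simultaneously. The delicate point is that the dyadic estimate at radius $r_k$ picks up genuine contribution from ring $A_k$, but also from rings $A_j$ with $j<k$ (for which the lower bound is even larger, so harmless) and potentially from rings $A_j$ with $j>k$ (where the bound degrades). I would handle this by a monotonicity argument: either integrate over $r$ to average out the cross-ring contributions, or telescope the dyadic inequalities to isolate the contribution of each single ring $A_k$, exploiting that each term $1-|b_n(re^{i\theta_0})|^2$ is a decreasing function of $r$ only for $r$ in a specific range depending on $a_n$.
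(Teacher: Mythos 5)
The paper does not actually prove this statement: Frostman's theorem is quoted there as a classical result and used as a black box, so your proposal must stand on its own. The sufficiency half is on the right track but has a real gap at the ``upgrade'' step: knowing that the tail product $T_N(r)=\prod_{n>N}b_n(re^{i\theta_0})$ has modulus within $\varepsilon$ of $1$ for all $r$ near $1$ says nothing about its argument, which could still wind (a function like $e^{i/(1-r)}$ has modulus $1$ and no radial limit), so ``the argument is Cauchy'' does not follow from what you establish. The standard repair is to abandon the quadratic identity for $1-|b_n|^2$ at this point and use the linear estimate $|1-b_n(z)|\le 2(1-|a_n|)/|1-\bar a_n z|$ together with $|1-\bar a_n re^{i\theta_0}|\ge\tfrac12|e^{i\theta_0}-a_n|$ for all $0\le r<1$; then $\sum_n|1-b_n(re^{i\theta_0})|$ converges uniformly in $r$, the product converges uniformly along the radius, and the limit may be taken factor by factor, each factor tending to a unimodular number.

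The necessity half breaks down irreparably. The concluding step --- ``summing over $k$ \ldots should give a finite bound'' --- needs $\sum_k\epsilon_k<\infty$ where $\epsilon_k=-\log|B(r_ke^{i\theta_0})|^2$, but the hypothesis only yields $\epsilon_k\to0$, and a null sequence need not be summable; no telescoping or averaging in $r$ can manufacture summability from mere convergence to zero. Worse, your argument uses only the hypothesis $|B(re^{i\theta_0})|\to1$, and that implication is genuinely false. Take $\theta_0=0$ and zeros in conjugate pairs $a_k,\bar a_k$ with $|1-a_k|\asymp2^{-k}$ and $1-|a_k|=2^{-k}/k$. Your own estimate gives $\sum_n\bigl(1-|b_n(r)|^2\bigr)\asymp 1/m$ when $1-r\asymp2^{-m}$, so $|B(r)|\to1$; the conjugate symmetry makes $B(r)$ real and nonvanishing near $r=1$, hence $B(r)\to\pm1$, i.e.\ the radial limit exists and is unimodular; yet $\sum_k\frac{1-|a_k|}{|1-a_k|}\asymp\sum_k\frac1k=\infty$. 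So no modulus-only argument can prove necessity, and in fact the statement as quoted omits the clause present in the classical theorem (that $B$ \emph{and all of its subproducts} have unimodular radial limits at $e^{i\theta_0}$), without which the necessity assertion itself fails.
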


\noindent Now we will use {Frostman's condition} $(1)$ and prove the following:

\begin{prp} Let $B(z,A)$ be a Blaschke product whose radial limits exist everywhere on the unit circle $T$ and with zero set $A=\lbrace a_n \rbrace$. Then the subset of $A'$ in the unit circle in which $B(z,A)$ has radial limits of modulus $1$ is a set that is both $G_{\delta}$ and $F_\sigma$. \end{prp}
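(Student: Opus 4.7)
Write $L(\theta):=\lim_{r\to 1^-}B(re^{i\theta})$, which exists for every $\theta$ by hypothesis, and set $S:=A'\cap\{|L|=1\}$ for the set of interest. Note that $A'$ is automatically closed in $T$ (the derived set is closed in any topological space, and $\sum(1-|a_n|)<\infty$ forces $|a_n|\to 1$, so all accumulation points of $A$ lie on $T$), and that $|L|\leq 1$ everywhere. The plan is to describe $\{|L|=1\}$ in two different ways, one yielding that it is $F_\sigma$ in $T$ and the other that it is $G_\delta$. Intersecting each description with the closed set $A'$ then gives the proposition.

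For the $F_\sigma$ direction I would invoke Frostman's condition (Theorem~16) applied to
\[
f(\theta):=\sum_{n}\frac{1-|a_n|}{|e^{i\theta}-a_n|}.
\]
Each summand is a positive continuous function on $T$, so the partial sums are continuous and $f$ is lower semicontinuous; equivalently, each sublevel set $\{f\leq N\}$ is closed. Under the hypothesis that the radial limit exists everywhere, Theorem~16 identifies $|L(\theta)|=1$ with $f(\theta)<\infty$, and therefore
\[
\{|L|=1\}=\bigcup_{N=1}^{\infty}\{f\leq N\}
\]
is an $F_\sigma$ subset of $T$, as is its intersection with $A'$.

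For the $G_\delta$ direction I would reformulate in terms of Baire class one. Each function $\theta\mapsto B((1-1/n)e^{i\theta})$ is continuous on $T$, and by hypothesis these converge pointwise to $L$; thus $L$, and hence $|L|$, is of the Baire first class on $T$. I would then quote the standard fact that a real-valued Baire-one function on a metric space pulls back every closed set to a $G_\delta$ set (equivalently, every open set to an $F_\sigma$). Since $|L|\leq 1$ everywhere, one has $\{|L|=1\}=|L|^{-1}([1,\infty))$, a preimage of a closed set, which is therefore $G_\delta$; intersecting with the closed (hence $G_\delta$) set $A'$ preserves this. The step requiring the most care is this Baire-one preimage characterization, which is not stated in the excerpt and would need a citation or short verification. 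The underlying observation is that Frostman's criterion supplies the $F_\sigma$ description essentially for free while the Baire-class-one viewpoint supplies the $G_\delta$ description for free, and neither tool alone appears to deliver both.
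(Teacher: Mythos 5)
Your proof is correct, and its first half coincides with the paper's: both use Frostman's condition to identify $\{|L|=1\}$ with the convergence set of $\sum_n\frac{1-|a_n|}{|e^{i\theta}-a_n|}$, and both obtain the $F_\sigma$ conclusion from the fact that the divergence (unboundedness) set of a sequence of continuous functions is $G_\delta$ --- equivalently, from your sublevel-set description $\bigcup_N\{f\leq N\}$. The $G_\delta$ half is where you genuinely diverge. The paper stays with the same single lemma: it forms the continuous functions $B^*_n(e^{i\theta})=\frac{1}{1-|B(r_ne^{i\theta})|}$ with $r_n=1-\frac{1}{n}$, observes that their set of unboundedness is exactly $\{\lim_{r\to 1}|B(re^{i\theta})|=1\}$ (using that the radial limit exists everywhere), and concludes $G_\delta$ directly from $\bigcap_N\bigcup_n\{B^*_n>N\}$. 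You instead pass through the Baire-class-one machinery: $|L|$ is a pointwise limit of continuous functions, and preimages of closed sets under Baire-one functions are $G_\delta$, so $|L|^{-1}([1,\infty))$ is $G_\delta$. Both routes are valid; you are right to flag the preimage characterization as the step needing a citation, since a naive direct computation of $\{|L|\geq 1\}$ as $\bigcap_m\bigcup_N\bigcap_{n\geq N}\{|B_{r_n}|\geq 1-\frac{1}{m}\}$ only exhibits it as a countable intersection of $F_\sigma$ sets, and the genuine lemma requires the slightly cleverer interval-shrinking argument. The paper's reciprocal trick is the more economical choice here because it reduces the $G_\delta$ claim to the very same elementary fact already used for the $F_\sigma$ claim, while your version is more conceptual and generalizes beyond the modulus-one level set to any closed target.
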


\begin{proof} Indeed if we define the following sequence of functions $\lbrace f_n \rbrace$

\begin{align*}
f_n:[0,2&\pi]\rightarrow\mathbb{R}_{>0}\\
&\theta\longmapsto\sum_{k=1}^{n}\frac{1-|a_k|}{|e^{i\theta}-a_k|}.
\end{align*}

\noindent We note that the sequence $\lbrace f_n\rbrace$ is a sequence of continuous functions on $[0,2\pi]$ and the sequence is unbounded exactly at the values of $\theta$  where {Frostman's condition} $(1)$ from above is not satisfied. That is exactly at the points where the Blaschke product $B(z,A)$ fails to have radial limits of modulus $1$. Now, since $f_n$ are continuous functions, the set of points where the sequence $\lbrace f_n\rbrace$ is unbounded, must be of type $G_\delta$. Indeed, for each integer $k \ge 1$, define the open set
\[
E_k = \bigcup_{n=1}^\infty \bigl\{ x \in [0,2\pi] : |f_n(x)| > k \bigr\}.
\]
Observe that a point $x$ belongs to $\bigcap_{k=1}^\infty E_k$ if and only if for every $k$ there exists $n$ with $|f_n(x)| > k$, that is, $\sup_n |f_n(x)| = +\infty$. Therefore, the set where the sequence $\{f_n\}$ is unbounded is exactly $\bigcap_{k=1}^\infty E_k$, which is of type $G_\delta$ as a countable intersection of open sets.
 Therefore by Frostamn's condition It follows that the set of points on $T$ where the radial limits of $B(z,A)$ exist and are of modulus $1$ is of type $F_\sigma$. Now let $r_n=1-\frac{1}{n}$ and consider the following set of continuous functions $$B^*_n(e^{i\theta})\equiv B^*_n(r_ne^{i\theta})=\frac{1}{1-|B_{{r_n}}(e^{i\theta})|}.$$\\

\noindent Then, clearly the set of points $\lbrace e^{i\theta} |\theta \in [0,2\pi]\rbrace$,  where the sequence of functions $B^*_n(e^{i\theta})$ is unbounded, is exactly the set of $\theta\in [0, 2\pi]$ where $\limsup_{r\rightarrow 1}B(re^{i\theta})=1$. But since the radial limits of out Blaschke product $B(z)$ exist everywhere on the unit circle $T$ , this is exactly the set of $\theta\in[0,2\pi]$ where $\lim_{r\rightarrow 1}B(re^{i\theta})=1$. It follows that the subset of $T$ in which the radial limits of $B(z;A)$ exist and are of modulus $1$ is of type $G_\delta$ as well, hence our proof is complete.
\end{proof}

\section{A case of the unification problem as an analogue of Kolesnikov's Theorem for Blaschke products} 

\noindent Now we will prove another theorem regarding the boundary behavior of Blaschke products. To prove the theorem we use the following theorem found in \cite{NI}.

\begin{theorem} Let $E$ be a subset of the unit circle of zero Lebesgue measure and of type $F_\sigma$ and $G_\delta$. Let $\phi$ be a function defined on $E$ with $\sup\lbrace |\phi(e^{it})|:e^{it}\in E\rbrace\leq 1$ and such that for each open set $U$ in the complex plane, $\phi^{-1}(U)$ is of type $F_\sigma$ and $G_\delta$. Then there exists a Blaschke product $I$ extending analytically on $T\setminus \overline{E}$ such that $$\lim_{r\rightarrow 1} I(re^{it})=\phi(e^{it})\text{ for }e^{it}\in E.$$
\end{theorem}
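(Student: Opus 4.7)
The plan is to reduce the theorem to an application of Theorem 10 (Nicolau's theorem) by producing a function $g$ in the unit ball of $H^\infty$ whose radial limits on $E$ agree pointwise with $\phi$. With such $g$ at hand, Nicolau's theorem immediately furnishes a Blaschke product $I$ extending analytically on $T \setminus \overline{E}$ with $\lim_{r \to 1}[I(re^{it}) - g(re^{it})] = 0$ for $e^{it} \in E$, whence $\lim_{r \to 1} I(re^{it}) = \phi(e^{it})$ on $E$, which is the conclusion sought.

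Setting up the hypotheses of Nicolau's theorem is the easy part. Since $E$ is $F_\sigma$, $G_\delta$, and of measure zero, Berman's Theorem 9 supplies a Blaschke product $B_0$ extending analytically on $T \setminus \overline{E}$ with $\lim B_0 = 0$ exactly on $E$, together with a Blaschke product $B_1$ satisfying $\lim B_1 = 1$ exactly on $E$. Since $B_1$ is a nonconstant Blaschke product we have $B_1 \neq 1$, so $B_1$ serves as the auxiliary $f_1$ required by Theorem 10. Consequently Theorem 10 is available for any $g$ in the unit ball of $H^\infty$.

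The heart of the proof is the construction of $g$. The hypothesis that $\phi^{-1}(U)$ is both $F_\sigma$ and $G_\delta$ for every open $U \subset \mathbb{C}$ places $\phi$ in the ambiguous Baire class 1 on $E$. Using the $F_\sigma$ structure of $E$, together with the $\sigma$-continuity characterization of Baire class 1 functions (sharpened using the $G_\delta$ half of the preimage hypothesis to keep the pieces closed), I would decompose $E = \bigcup_n K_n$ as an increasing chain of compact sets with $\phi|_{K_n}$ continuous for every $n$. Each $K_n$ is a compact subset of $T$ of measure zero, so the Rudin--Carleson theorem (a standard consequence of Fatou's interpolation theorem, Theorem 2 of this paper) produces $h_n \in A(\overline{D})$ with $h_n|_{K_n} = \phi|_{K_n}$ and $\|h_n\|_\infty \le 1$. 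To splice the $h_n$ into a single $g$, I would iterate: given $g_n$ which agrees with $\phi$ on $K_1, \ldots, K_n$, define $g_{n+1} = g_n + \epsilon_{n+1} u_{n+1}$, where $u_{n+1} \in A(\overline{D})$ is a Rudin--Carleson extension of $\phi - g_n$ from $K_{n+1}$ (which vanishes automatically on the closed subset $K_n$ since $\phi - g_n$ does, by the inductive hypothesis) and $\epsilon_{n+1} \in (0,1]$ is chosen to control norms. Passing to a normal-family limit yields a candidate $g \in H^\infty$ with the required boundary values.

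The main obstacle will be this splicing step. Exact boundary matching at each stage (taking $\epsilon_n = 1$) tends to inflate the uniform norm beyond 1, while approximate matching preserves the norm but risks losing exact pointwise radial limits at individual points of $E$ in the limit. Controlling both simultaneously is the technical crux, and is precisely where the $G_\delta$ half of the preimage hypothesis on $\phi$ enters essentially: it ensures enough regularity in the nested $K_n$ that the continuity of $\phi - g_n$ on $K_{n+1}$ forces its sup-norm there to be small whenever $K_n$ is sufficiently dense in $K_{n+1}$, so that the iteration both converges in $H^\infty$ and delivers exact pointwise radial limits $\phi(e^{it})$ at every $e^{it} \in E$.
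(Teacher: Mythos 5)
First, a point of comparison: the paper does not prove this statement at all. It is quoted verbatim as a known result of Nicolau (\cite{NI}) and used as a black box in the proof of the theorem that follows it, so there is no in-paper argument to measure yours against. That said, your high-level architecture --- obtain $B_0$ and $B_1$ from Berman's Theorem 9 so that the hypotheses of Nicolau's Theorem 10 are met, then feed Theorem 10 a function $g$ in the unit ball of $H^\infty$ whose radial limits on $E$ equal $\phi$ --- is the right frame, and the reduction itself is sound: if such a $g$ exists, Theorem 10 produces a Blaschke product $I$ extending analytically across $T\setminus\overline{E}$ with $\lim_{r\rightarrow 1}[I(re^{it})-g(re^{it})]=0$ on $E$, hence $\lim_{r\rightarrow 1}I(re^{it})=\phi(e^{it})$ there.

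The gap is that the construction of $g$, which is the entire content of the theorem once Theorem 10 is granted, is not actually carried out, and the sketch of it rests on a claim that is false as stated. The decomposition $E=\bigcup_n K_n$ into an increasing chain of compacta with $\phi|_{K_n}$ continuous can indeed be justified (via the Jayne--Rogers characterization of functions whose preimages of open sets are ambiguously $\Sigma^0_2$, using that $E$ is Polish because it is $G_\delta$ and that relatively closed subsets of $E$ are $\sigma$-compact), and each individual Rudin--Carleson step is fine. But the splicing does not close: with $\epsilon_{n+1}=1$ one only gets $\|u_{n+1}\|_\infty=\|\phi-g_n\|_{C(K_{n+1})}$, which can be as large as $1+\|g_n\|_\infty$, so the norms of the $g_n$ blow up; with $\epsilon_{n+1}<1$ the function $g_{n+1}$ fails to match $\phi$ on $K_{n+1}\setminus K_n$, and exactness must then be recovered by infinitely many further corrections whose norms must be summable while every partial sum stays in the unit ball. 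Your proposed escape --- that the $G_\delta$ half of the preimage hypothesis forces $K_n$ to be ``sufficiently dense'' in $K_{n+1}$ so that $\|\phi-g_n\|_{C(K_{n+1})}$ is small --- is not true: the $K_n$ coming from the decomposition bear no density relation to one another ($K_{n+1}\setminus K_n$ may be an entire compact component disjoint from a neighborhood of $K_n$), and even when the sets are close, $\phi$ may jump by $2$ between them, since continuity of $\phi|_{K_{n+1}}$ carries no modulus-of-continuity control tied to $K_n$. To make this line of argument work one needs an additional damping device (for instance, multiplying each correction by a high power of a Fatou--type peak function equal to $1$ on $K_n$ and of modulus less than $1$ elsewhere, as in the method of \cite{AD}), or one must follow Nicolau's own route of approximating $\phi$ by countably-valued functions on an $F_\sigma\cap G_\delta$ partition realized through Berman-type products. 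As written, the step you yourself identify as the technical crux is exactly the step that remains unproved.
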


\noindent The theorem we want to prove is the following:
\begin{theorem} Let $E_1\subset E_2$ be subsets of the unit circle $T\equiv\lbrace z\mid |z|=1 \rbrace$. Moreover, assume that $E_2$ is of measure zero. Then there exists a Blaschke product $B$ such that the radial limits of $B$ fail to exist precisely at $E_1$ and it has unrestricted limits precisely at $T\setminus E_2$ if and only if $E_2$ is closed and $E_1$ is of type $G_{\delta\sigma}$ and of measure zero.
\end{theorem}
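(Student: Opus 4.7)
The plan is to handle necessity by direct citation and sufficiency by constructing $B$ as a product of two Blaschke products, one targeting the unrestricted-limit structure on $E_2$ and the other the radial-limit failure on $E_1$.

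For necessity, $E_1$ has measure zero by Fatou's Theorem (Theorem 7), $E_1$ is of type $G_{\delta\sigma}$ by the classical converse of Kolesnikov's Theorem recorded after Theorem 1, and $E_2$ is closed because the set of boundary points at which a bounded analytic function admits an unrestricted limit is automatically open: a continuous extension to $\overline{D}$ at one boundary point simultaneously witnesses unrestricted limits throughout a neighborhood on $T$.

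For sufficiency, first invoke Colwell's Theorem (Theorem 13). Since $E_2$ is closed of measure zero it is nowhere dense, so there is a Blaschke product $B_2$ whose zeros accumulate exactly on $E_2$ and whose radial limits exist and have modulus one at every point of $T$; in particular $B_2$ has unrestricted limits precisely on $T\setminus E_2$. Next, apply Kolesnikov's Theorem (Theorem 1) to $E_1$ to produce a bounded analytic function that fails radial limits exactly on $E_1$; shift it by a sufficiently large constant and rescale to obtain $g$ in the unit ball of $H^\infty$ which fails radial limits exactly on $E_1$ and whose radial limits on $T\setminus E_1$ are bounded away from zero. Since $E_2$ is closed of measure zero it is simultaneously $F_\sigma$ and $G_\delta$, so Berman's Theorem (Theorem 9) produces Blaschke products $B_0$ and $B_1$ verifying the hypotheses of Nicolau's Theorem (Theorem 10) with $E=E_2$; feeding $g$ into that theorem yields a Blaschke product $I$ extending analytically to $T\setminus E_2$ such that $\lim_{r\to 1}[I(re^{it})-g(re^{it})]=0$ for every $e^{it}\in E_2$. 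Consequently $I$ fails radial limits exactly on $E_1$ and, on $E_2\setminus E_1$, has the same non-zero radial limit as $g$.

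Set $B:=B_2\cdot I$, itself a Blaschke product (its zero set is the union of the two zero sets, and the Blaschke condition persists). On $T\setminus E_2$ both factors extend analytically, hence so does $B$. On $E_2\setminus E_1$ both factors have non-zero radial limits, giving $B$ a non-zero radial limit there, yet the zeros of $B_2$ accumulate, forcing $B$ to take the value zero in every neighborhood, so no unrestricted limit can exist. On $E_1\subset E_2$, $B_2$ has a radial limit of modulus one while $I$ has no radial limit, so $B$ inherits the failure (if $B$ had a radial limit, dividing by $B_2$, which stays bounded away from zero radially, would recover $I$'s radial limit). This yields exactly the desired behavior. The chief obstacle is that $E_1$ is only $G_{\delta\sigma}$ rather than closed, so Theorem 8 does not directly supply a Blaschke product with the required failure set for radial limits; Nicolau's Theorem is the key device that converts the non-Blaschke Kolesnikov function into a Blaschke product with identical radial behavior on $E_2$, and the closedness and measure-zero of $E_2$ are exactly what make Berman and Nicolau applicable.
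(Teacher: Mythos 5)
Your necessity argument contains one genuinely false step: the claim that for a bounded analytic function the set of boundary points admitting unrestricted limits is automatically open. An unrestricted limit at a single point $e^{i\theta_0}$ gives a continuous extension to $D\cup\lbrace e^{i\theta_0}\rbrace$ only; it says nothing about neighboring boundary points. Indeed, Theorem 5 of this paper shows the set of points where unrestricted limits fail can be \emph{any} $F_\sigma$ set --- take it to be a countable dense subset of $T$ and the unrestricted-limit set has empty interior, so it is certainly not open. The correct argument is specific to Blaschke products and is the one the paper uses: the unrestricted-limit set of a Blaschke product is exactly the complement of the derived set $A'$ of its zeros (if zeros accumulate at a point the cluster set there contains $0$ together with values of modulus arbitrarily close to $1$, and if they do not, $B$ extends analytically across an arc), so $E_2=A'$ is closed. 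Since the conclusion is true and the fix is one line, this is a repairable error rather than a structural one, but as written the step does not stand.

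Your sufficiency argument is correct and takes a genuinely different route from the paper's. The paper builds $g=f_1B_2$ where $f_1$ is the Kolesnikov function and $B_2$ is a Blaschke product (from Theorem 18) with radial limit $\frac{1}{2}$ on $E_2$, and then takes the Nicolau product $I$ approximating $g$ on $E_2$ as the final answer; the failure of unrestricted limits on $E_2\setminus E_1$ then rests on the (unstated) fact that an inner function whose radial limit at a boundary point has modulus less than $1$ cannot have an unrestricted limit there. You instead keep the Nicolau product $I$ (with hypotheses verified explicitly via Berman's Theorem, which the paper leaves implicit) only to transplant the radial behavior of the Kolesnikov function onto $E_2$, and then multiply by a Colwell Blaschke product whose zeros accumulate at \emph{every} point of $E_2$. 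That extra factor makes the failure of unrestricted limits on all of $E_2\setminus E_1$ completely transparent --- a nonzero radial limit coexisting with zeros in every neighborhood --- at the modest cost of having arranged the radial limits of $g$ on $E_2\setminus E_1$ to be nonzero, which your shift-and-rescale of the Kolesnikov function handles. Your version is, if anything, the more airtight of the two on this point.
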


\begin{proof}\textbf{(Necessity.)}
The proof that $E_2$ is closed follows since any Blaschke product has unrestricted limits only on the set complimentary to the accumulation of its zeros. This implies that the set $E_2$ is the closed set in which the zeros of our Blaschke product accumulate. The fact that $E_1$ is $G_{\delta\sigma}$ is well known and as noted earlier its proof may be found in \cite{AL}. \\
\textbf{(Sufficiency.)}
Since $E_1$ is a $G_{\delta\sigma}$ of measure zero we know by {Kolesnikov's Theorem} that there exists a bounded analytic function $f_1$ with no radial limits exactly on $ E_1$. Moreover, since $E_2$ is a closed zero measure set we can apply {Theorem 8} to find a Blaschke product $B_2$ such that the radial limits of $B_2$ are equal to $\frac{1}{2}$ for all $e^{it}\in E_2$. Moreover, since $E_2$ is closed the conditions of Nicolau's Theorem are satisfied as well. It follows that for every bounded analytic function $g$ there exists a Blaschke product $I$ mimicking its boundary behavior on the set $E_2$. Now the product function $g(z)\equiv f_1(z)\cdot B_2(z)$ has the desired boundary behavior restricted on $E_2$; that is $g$ has no radial limits exactly on $E_1$ and has radial but not unrestricted limits on $E_2\setminus E_1$ . Thus, by Nicolau's Theorem we can find a Blaschke product $I$ that extends analytically on $T\setminus E_2$ and such that $$\lim_{r\rightarrow 1}[ I(re^{it})-g(re^{it}]=0$$
Then the Blaschke product $I$ has the desired boundary behavior and the proof is complete.\\

\noindent\textbf{Remark 3.}
\noindent Recall that {Kolesnikov's Theorem} states that for any set $E$ of type $G_{\delta\sigma}$ on the unit circle there exists a bounded analytic function which fails to have radial limits precisely on that set. Thus Theorem 18 can be thought of as an analogue of {Kolesnikov's Theorem} for Blaschke products, but with the extra requirement that $\overline{E}$ is of measure zero.

\end{proof}


\begin{thebibliography}{99}
\bibitem{Sp} A. Danelyan and Spyros Pasias, On a boundary property of Blaschke products, Analysis mathematica 49, 403-408, (2023)
\bibitem{D} A. Danielyan and Spyros Pasias, On Baire's first class functions and Blascke products, Real Analysis Exchange Vol. 49(2), pp. 1–3, (2024)
\bibitem{BE} R.D. Berman, The sets of fixed radial limit value for inner functions, Illinois J. Math.(2) \textbf{29}, 191-219, (1985).
\bibitem{NI} A. Nicolau, Blaschke product with prescribed radial limits, Bull. London Math. Soc. \textbf{23}, 249-255 (1991).
\bibitem{S} C. Stegall, Functions of the First Baire Class with Values in Banach Spaces. Proceedings of the American Mathematical Society, 111(4), 981, (1991).
\bibitem{PE} Peter Colwell, On the Boundary behavior of Blaschke products in the unit disk, Proc. of AMS, \textbf{17}, 582-587, (1996).
\bibitem{PI} A.J Lohwater and G.Piranian, The boundary behavior of functions analytic in a disc, Ann. Acad. Sci. Fenn., Series A, Mathematica, 239, (1957).
\bibitem{B} N.K Bary, A Treatise on Trigonometric series, V. 2, Pergamon Press, (1964).
\bibitem{KO} S.V Kolesnikov, On the sets of nonexistence of radial limits of bounded analytic functions, Mat. Sb., 185, (1994), 91-100 translation in Russian Acad. Sci. Sb. Math.m 81, 477-485, (1995).
\bibitem{art} A.A. Danielyan, Interpolation by Bounded Analytic Functions and Related Questions, New Trends in Approximation Theory, Fields Institute Communications 81, 215-224, (2018).
\bibitem{AL} E.F Collinwood and A.J Lohwater The theory of cluster sets, Cambridge Tracts in Mathematics and Mathematical Physiscs 56, Cambridge University Press, (2009).
\bibitem{DA} A.A. Danielyan, Rubel's Problem on Bounded Analytic Functions, Annales Academiae Scientiarum Fennicae Mathematica V. 41, 813-816, (2016).
\bibitem{Da} A.A. Danielyan, On Fatou's Theorem, Anal. Math. Phys. \textbf{10}, Article number: 28, (2020)
\bibitem{ART} A.A. Danielyan, A proof of Fatou's Interpolation Theorem, Journal of Fourier Analysis and Applications \textbf{28}, Article number:45, (2022)
\bibitem{HO} K.Hoffman, Banach Spaces of Analytic Functions, Prentice Hall, Englewood Cliffs, New Jersey, (1962).
\bibitem{AD} A.A. Danielyan, A theorem of Lohwater and Piranian, Proc. of AMS, \textbf{144}, 3919-3920, (2016).
\bibitem{Spy}	Spyros Pasias, Boundary Behavior of Analytic Functions and Approximation Theory, University of South Florida, Library of Congress USA, ProQuest, LLC, (2022).

\end{thebibliography}
\end{document}